\theoremstyle{definition}
\newtheorem{defi}{Definition}[section]
\newtheorem{rema}{Remark}[section]
\newtheorem{exam}{Example}[section]
\theoremstyle{plain}
\newtheorem{theo}{Theorem}[section]
\newtheorem{lemm}[theo]{Lemma}
\newtheorem{prop}[theo]{Proposition}
\newtheorem{coro}[theo]{Corollary}
\numberwithin{equation}{section}
\newcommand{\mN}{\mathbb{N}}
\newcommand{\mZ}{\mathbb{Z}}
\newcommand{\mP}{\mathbb{P}}
\newcommand{\mR}{\mathbb{R}}
\newcommand{\mdot}{\!\cdot\!}
\newcommand{\ndiv}{\!\nmid\!}
\newcommand{\mdiv}{\!\mid\!}
\newcommand{\copr}{\!\perp\!}
\newcommand{\ncopr}{\!\not\perp\!}
\newcommand{\vl}{\vrule width 2pt}
\begin{document}

\title{Algorithmic concepts for the computation of Jacobsthal's function}
\author{Mario Ziller and John F. Morack}
\date{}

\maketitle

\begin{abstract}

The Jacobsthal function has aroused interest in various contexts in the past decades. We review several algorithmic ideas for the computation of Jacobsthal's function for primorial numbers and discuss their practicability regarding computational effort. The respective function values were computed for primes up to 251. In addition to the results including  previously unknown data, we provide exhaustive lists of all sequences of the appropriate maximum lengths in ancillary files.\\
\end{abstract}

\section{Introduction}

Henceforth, we denote the set of integral numbers by $\mZ$ and the set of natural numbers, i.e. positive integers, by $\mN$. $\mP=\{p_i\mid i\in\mN\}$ is the set of prime numbers with $p_1=2$. As usual, we define the $n^{th}$ primorial number as the product of the first  $n$ primes: $p_n\#=\prod_{i=1}^n p_i\ , n\in\mN$. We essentially follow the notation of Hagedorn 2009  \cite{Hagedorn_2009}.\\

The ordinary Jacobsthal function $j(n)$ is defined to be the smallest positive integer $m$, such that every sequence of $m$ consecutive integers contains at least one integer coprime to $n$ \cite{Jacobsthal_1960_I, Erdos_1962, Hagedorn_2009, Costello_Watts_2013}.

\begin{defi} \label{Jacobsthal} {\itshape Jacobsthal function.} \cite{Sequence_A048669}\\
For $n\in\mN$, the Jacobsthal function $j(n)$ is defined as
$$j(n)=\min\ \{m\in\mN\mid\forall\ a\in\mZ\ \exists\ q\in\{1,\dots,m\}:a+q\copr n\}.$$
\end{defi}

This definition is equivalent to the formulation that $j(n)$ is the greatest difference $m$ between two terms in the sequence of integers which are coprime to $n$.
\begin{equation*} \begin{split}
j(n)=\max\ \{m\in\mN\mid\ &\exists\ a\in\mZ\ :a\copr n\land a+m\copr n\ \land\\
&\forall\ q\in\{1,\dots,m-1\}:a+q\ncopr n\}.
\end{split} \end{equation*}

In other words, $(j(n)-1)$ is the greatest length $m^*=m-1$ of a sequence of consecutive integers which are not coprime to $n$. For this reason, we define a reduced variant of the Jacobsthal function \cite{Jacobsthal_1960_I}. This will make a simplified representation possible.

\begin{defi} \label{reduced} {\itshape Reduced Jacobsthal function.} \cite{Sequence_A132468}\\
For $n\in\mN$, the reduced Jacobsthal function $j^*(n)$ is defined as
$$j^*(n)=j(n)-1=\max\ \{m^*\in\mN\mid\exists\ a\in\mZ\ \forall\ q\in\{1,\dots,m^*\}:a+q\ncopr n\}.$$
\end{defi}

\begin{rema} \label{properties}
The following statements are elementary consequences of the definition of Jacobsthal's function and describe some interesting properties of it \cite{Jacobsthal_1960_I}.

Product.\\
\hspace*{ 1cm}$\forall\ n1,n2 \in\mN:j(n1\mdot n2)\ge j(n1) \land j(n1\mdot n2)\ge j(n2)$.

Coprime product.\\
\hspace*{ 1cm}$\forall\ n1,n2 \in\mN>1\mid n1\copr n2:j(n1\mdot n2)>j(n1)\land j(n1\mdot n2)>j(n2)$.

Greatest common divisor.\\
\hspace*{ 1cm}$\forall\ n1,n2 \in\mN:j(gcd(n1,n2))\le j(n1) \land j(gcd(n1,n2))\le j(n2)$.

Prime power.\\
\hspace*{ 1cm}$\forall\ n,k \in\mN\ \forall\ p \in\mP:j(p^k\mdot n)=j(p\mdot n)$.

Prime separation.\\
\hspace*{ 1cm}$\forall\ n,n^*,k \in\mN\ \forall\ p \in\mP\mid n=p^k\mdot n^*,p\copr n^*:j(n)=j(p\mdot n^*)$.
\end{rema}

The last remark implies that the entire Jacobsthal function is determined by its \linebreak values for products of distinct primes \cite{Jacobsthal_1960_I}. In his subsequent elaborations \cite{Jacobsthal_1960_I,Jacobsthal_1960_II,Jacobsthal_1960_III,Jacobsthal_1961_IV,Jacobsthal_1961_V}, Jacobsthal derived explicit formulae for the calculation of function values for squarefree integers containing up to 7 distinct prime factors, and bounds of the function for up to 10 distinct prime factors.

The particular case of primorial numbers is therefore most interesting because the function values at these points contain the relevant information for constructing general upper bounds. The Jacobsthal function of primorial numbers $h(n)$ \cite{Hagedorn_2009} is therefore defined as the smallest positive integer $m$, such that every sequence of $m$ consecutive integers contains an integer coprime to the product of the first n primes.

\begin{defi} {\itshape Primorial Jacobsthal function.} \label{h} \cite{Sequence_A048670}\\
For $n\in\mN$, the primorial Jacobsthal function $h(n)$ is defined as
$$h(n)=j(p_n\#).$$
\end{defi}

By analogy with definition \ref{reduced}, we also define a reduced variant of the latter function which represents the greatest length of a sequence of consecutive integers which are not coprime to the $n^{th}$ primorial number.

\begin{defi} {\itshape Reduced primorial Jacobsthal function.} \cite{Sequence_A058989}\\
For $n\in\mN$, the reduced primorial Jacobsthal function $h^*(n)$ is defined as
\begin{equation*} \begin{split}
&h^*(n)=h(n)-1,\\
&h^*(n)=\max\ \{m^*\in\mN\mid\exists\ a\in\mZ\ \forall\ q\in\{1,\dots,m^*\}:a+q\ncopr p_n\#\}.
\end{split} \end{equation*}
\end{defi}

For the effective computation of $h(n)$, or $h^*(n)$ respectively, it is sufficient to omit the first prime 2 from the calculation. Therefore, we define a condensed Jacobsthal function $\omega(n)$ which is directly related to $h(n)$. Its computation reduces unnecessary effort. Hagedorn described this function in the context of killing sieves \cite{Gordon_Rodemich_1998,  Hagedorn_2009} which we avoid here. We prefer the straightforward derivation of it for our purpose. Furthermore, we harmonise the arguments of the functions $\omega(n)$ and $h(n)$ so that $n$ in both of them refer to the greatest considered prime $p_n$ whereas Hagedorn \cite{Hagedorn_2009} links $\omega(n)$ to $p_{n+1}$.

\begin{defi} {\itshape Condensed Jacobsthal function.} \label{omega} \cite{Sequence_A072752}\\
For $n\in\mN>1$, the condensed Jacobsthal function $\omega(n)$ is defined as the greatest length of a sequence of consecutive integers which are not coprime to the product of the odd primes through $p_{n}$.
$$\omega(n)=j^*(p_{n}\# /2).$$
\end{defi}

In other words, $\omega(n)$ is the greatest length of a sequence of consecutive integers each of which is divisible by one of the odd primes through $p_{n}$.\\

The prime 2 plays a specific role for the Jacobsthal function. The following lemma describes an interesting property of it and makes the direct calculation of $j(n)$ as a function of $\omega(n)$ possible.

\begin{lemm} \label{even}
Let $n\in\mN$ with $2\ndiv n$. Then\\
$$j(2\mdot n)=2 \mdot j(n).$$
\end{lemm}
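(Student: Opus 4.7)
The plan is to prove the two inequalities $j(2\mdot n)\ge 2\mdot j(n)$ and $j(2\mdot n)\le 2\mdot j(n)$ separately, with the Chinese Remainder Theorem doing the heavy lifting in both halves (it applies because $2\copr n$).

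For the lower bound, I would start from a witness to Definition \ref{reduced}: a block $a+1,\dots,a+j(n)-1$ whose every term is non-coprime to $n$, together with a choice of prime $p_i\mid n$ dividing $a+i$ for each $i$. Since $n$ is odd, every $p_i$ is odd. Use CRT to pick an odd integer $c$ with $c\equiv 2a\pmod n$, and consider the $2j(n)-1$ consecutive integers $c+1,c+2,\dots,c+2j(n)-1$. Because $c$ is odd, the entries $c+k$ with $k$ odd are all even, hence non-coprime to $2\mdot n$. The remaining entries are $c+2i$ for $i=1,\dots,j(n)-1$, and each satisfies $c+2i\equiv 2(a+i)\equiv 0\pmod{p_i}$, so it is non-coprime to $n$ and therefore to $2\mdot n$. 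This bad run of length $2j(n)-1$ yields $j(2\mdot n)\ge 2\mdot j(n)$.

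For the upper bound, take a worst-case block $b+1,\dots,b+m$ with $m=j(2\mdot n)-1$, every term non-coprime to $2\mdot n$. The odd terms of this block must already be non-coprime to $n$ (the factor $2$ cannot help them), and they form an arithmetic progression of common difference $2$ and length $\ell\ge\lfloor m/2\rfloor$, say $c,c+2,\dots,c+2(\ell-1)$. Using $2\copr n$, pick an integer $a$ with $2a\equiv c\pmod n$. If an odd prime $p\mid n$ divides $c+2i$, then $p\mid 2(a+i)$, and since $p$ is odd this forces $p\mid a+i$. Hence $a,a+1,\dots,a+\ell-1$ is a run of $\ell$ consecutive integers each non-coprime to $n$, so $j(n)-1\ge\ell\ge\lfloor(j(2\mdot n)-1)/2\rfloor$. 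A short case distinction on the parity of $j(2\mdot n)$ turns this into $j(2\mdot n)\le 2\mdot j(n)$.

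The main obstacle is the upper-bound step. The lower bound is essentially an explicit CRT construction. The upper bound, in contrast, relies on the observation that on odd integers, divisibility by an odd prime $p\mid n$ is preserved under the substitution $x\mapsto 2^{-1}x\pmod n$; this lets one turn a common-difference-$2$ arithmetic progression of bad odd integers into a genuine run of consecutive bad integers. Once this translation is in hand, combining the two directions completes the proof.
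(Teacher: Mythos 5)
Your proof is correct and its core mechanism --- using the Chinese Remainder Theorem and the invertibility of $2$ modulo $n$ to pass between a bad run for $n$ and a twice-as-long bad run for $2\mdot n$ --- is exactly the paper's. The one substantive difference is that the paper only writes out the lower bound $j(2\mdot n)\ge 2\mdot j(n)$ and dismisses the converse with the sentence ``the maximality remains retained in both directions,'' whereas you make that direction explicit: extracting the odd terms of a maximal bad block for $2\mdot n$, compressing them via $x\mapsto 2^{-1}x \pmod n$ into $\ell\ge\lfloor (j(2\mdot n)-1)/2\rfloor$ consecutive integers non-coprime to $n$, and finishing with the parity case split. That upper-bound argument is sound (an odd term non-coprime to $2\mdot n$ is necessarily non-coprime to $n$, and an odd prime $p\mid n$ dividing $2(a+i)$ must divide $a+i$), so your write-up is, if anything, more complete than the paper's.
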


\begin{proof}
We get $j(1)=1$ because every natural number is coprime to 1. Furthermore, $j(2)=2$ because every sequence of two consecutive integers containes one even and one odd number.

Let now n>1. According to definition 1.2, we resume ...
$$j(n)=\max\ \{m\in\mN\mid\exists\ a\in\mZ\ \forall\ q\in\{1,\dots,m-1\}:a+q\ncopr n\}.$$
Given $m\in\mN$ and $a\in\mZ$ as above. Then, there exists an $a^*\in\mZ$ with
$a^*\equiv 2\mdot a\ (mod\ n)$ and $a^*\equiv 1\ (mod\ 2)$ due to the Chinese remainder theorem.
With this $a^*$, we get for $q=1,\dots,m-1$
$$a^*+2\mdot q\equiv 2\mdot a+2\mdot q\equiv 2\mdot(a+q)\ (mod\ n),\ \text{and}$$
$$a^*+2\mdot q-1\equiv 1+2\mdot q-1\equiv 0\ (mod\ 2).$$

Because in addition $a^*+2\mdot m-1\equiv 0\ (mod\ 2)$,
$$\exists\ a^*\in\mZ\ \forall\ q\in\{1,\dots,2\mdot m-1\}:a^*+q\ncopr 2\mdot n\}$$
holds, and $j(2\mdot n)\ge 2\mdot m=2\mdot j(n)$ follows. The maximality remains retained in both directions.
\end{proof}

\begin{coro} \label{h_omega}
Let $n\in\mN>1$.\\
$$h(n)=2\mdot\omega(n)+2.$$
\end{coro}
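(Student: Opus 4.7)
The plan is to recognise that this corollary is essentially a direct bookkeeping exercise combining Lemma \ref{even} with the definitions of $h$, $\omega$, and $j^*$. First I would unpack the definitions: $h(n) = j(p_n\#)$ by Definition \ref{h}, and for $n > 1$ we can factor $p_n\# = 2 \cdot (p_n\#/2)$ where $p_n\#/2 = \prod_{i=2}^{n} p_i$ is a product of odd primes and hence odd.

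Next I would invoke Lemma \ref{even} with the role of $n$ in that lemma played by $p_n\#/2$. Since $2 \ndiv p_n\#/2$, the hypothesis is met and we obtain $j(p_n\#) = j(2 \cdot (p_n\#/2)) = 2 \cdot j(p_n\#/2)$.

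Finally I would translate back to $\omega$ using Definition \ref{omega}, which gives $\omega(n) = j^*(p_n\#/2) = j(p_n\#/2) - 1$ by Definition \ref{reduced}. Substituting yields
\[
h(n) \;=\; 2 \cdot j(p_n\#/2) \;=\; 2\cdot(\omega(n)+1) \;=\; 2\cdot\omega(n) + 2,
\]
which is the claimed identity.

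There is really no main obstacle here; the only point requiring a moment's care is checking that $p_n\#/2$ is a well-defined positive integer coprime to 2 (which needs $n \geq 1$, and the condition $n > 1$ in the corollary is more than enough). The substantive content has already been done in Lemma \ref{even}, so the proof reduces to a short chain of equalities.
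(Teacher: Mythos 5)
Your proposal is correct and follows essentially the same route as the paper's own proof: both reduce the identity to Lemma \ref{even} applied with $p_n\#/2$ (an odd number for $n>1$) in the role of $n$, combined with the definitions of $h$, $\omega$, and $j^*$. The paper merely writes the same chain of equalities in the opposite direction, starting from $\omega(n)$ and ending at $h(n)$.
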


\begin{proof}\ 

$\omega(n)=j^*(p_{n}\# /2)=j(p_{n}\# /2)-1=j(p_{n}\#)/2-1.$

$h(n)=j(p_n\#)=2\mdot\omega(n)+2.$
\end{proof}

The function $\omega(n)$ is only defined for $n>1$. For $n=1$, we remark $h(1)=j(2)=2$.\\

The counting of $\omega(n)$ remains the central computational problem. By definition, $\omega(n)$ is the greatest length of a sequence of consecutive integers which are not coprime to all of the first $n-1$ odd primes $p_2,\dots,p_n$. This means that every integer of the sequence must be divisible by at least one of these primes. On the other hand, those sequences can be characterised by a unique choice of non-zero residue classes for each prime.

\begin{prop} \label{remainders}
Let $m,n\in\mN,\ n>1$.\\ The following two statements are equivalent if $m=\omega(n)$.
  \begin{enumerate}
\item[(1)]
There is an $a\in\mZ$ so that every integer of the sequence $\{a+1,\dots,a+m\}$\\
is divisible by one of the primes $p_2,\dots,p_n$.
$$\exists\ a\in\mZ\ \forall\ q\in\{1,\dots,m\}\ \exists\ i\in\{2,\dots,n\}:a+q\equiv 0\ (mod\ p_i).$$
\item[(2)]
For every prime $p_2,\dots,p_n$, there exists one non-zero residue class\\
so that every integer of the sequence $\{1,\dots,m\}$ belongs to one of them.
\begin{equation*} \begin{split}
&\exists\ a_i\in\{1,\dots,p_i-1\},\ i=2,\dots,n\\
&\forall\ q\in\{1,\dots,m\}\ \exists\ i\in\{2,\dots,n\}:q\equiv a_i\ (mod\ p_i).
\end{split} \end{equation*}
  \end{enumerate}
\end{prop}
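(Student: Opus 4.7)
The plan is to prove the two implications separately using the correspondence $a_i \equiv -a \pmod{p_i}$, which translates the divisibility condition $p_i \mid a+q$ in (1) into the congruence condition $q \equiv a_i \pmod{p_i}$ in (2).

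For $(2) \Rightarrow (1)$, given residues $a_i$, I would invoke the Chinese remainder theorem to construct an integer $a$ with $a \equiv -a_i \pmod{p_i}$ for every $i \in \{2, \dots, n\}$; this is possible because $p_2, \dots, p_n$ are pairwise coprime. Then $p_i \mid a+q$ iff $q \equiv a_i \pmod{p_i}$, so the covering of $\{1, \dots, m\}$ asserted in (2) transfers directly to the covering of $\{a+1, \dots, a+m\}$ required in (1).

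For the converse $(1) \Rightarrow (2)$, given $a$ as in (1), I would define $a_i$ as the reduction of $-a$ modulo $p_i$, initially only in $\{0, 1, \dots, p_i - 1\}$. The covering property in (2) then follows immediately from (1) by reversing the equivalence above. The only nontrivial point is showing $a_i \neq 0$ for every $i$, so that $a_i$ actually lies in $\{1, \dots, p_i - 1\}$; this is where the hypothesis $m = \omega(n)$ becomes essential. If $a_i = 0$ for some $i$, then $p_i \mid a$, hence $a$ itself is not coprime to $p_{n}\#/2$. Combined with (1), every integer of $\{a, a+1, \dots, a+m\}$ would then fail to be coprime to $p_{n}\#/2$, producing a run of $m+1 = \omega(n)+1$ consecutive such integers and contradicting the maximality expressed by $\omega(n) = j^*(p_{n}\#/2)$.

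The main obstacle, and the only place where the hypothesis $m = \omega(n)$ is genuinely used, is ruling out the degenerate case $a_i = 0$. Without maximality the equivalence would still hold if (2) were broadened to allow $a_i \in \{0, \dots, p_i - 1\}$, reducing to a purely CRT-based restatement; it is precisely the maximality of $\omega(n)$ that upgrades the allowed range to $\{1, \dots, p_i - 1\}$, since any zero residue would permit extending the forbidden sequence one step to the left.
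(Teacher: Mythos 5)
Your proof is correct and follows essentially the same route as the paper: the Chinese remainder theorem for $(2)\Rightarrow(1)$ with $a\equiv -a_i\ (mod\ p_i)$, and the assignment $a_i\equiv -a\ (mod\ p_i)$ together with the maximality of $m=\omega(n)$ to rule out $a_i=0$ for $(1)\Rightarrow(2)$. The paper compresses the latter point to the single remark ``$p_i\ndiv a$ because $m$ is maximum,'' whereas you spell out the extension-by-one contradiction explicitly; the content is identical.
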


\begin{proof}\ \\
$(1)\Rightarrow(2)$:

\mbox{$a_i\equiv-a\ (mod\ p_i)$}, \mbox{$i=2,\dots,n$} satisfy the respective congruences of (2).\linebreak $p_i\ndiv a$ because m is maximum.\\
$(2)\Rightarrow(1)$:

According to the Chinese remainder theorem, there exists an $a\in\mZ$ solving the system of simultaneous congruences $a\equiv-a_i\ (mod\ p_i),\ i=2,\dots,n$. With this solution $a$, $\{a+1,\dots,a+m\}$ fulfils (1).
\end{proof}

\begin{rema}
In this proposition, $m$ need not necessarily be the maximum sequence length. It remains true for $m\le\omega(n)$ if $p_i\ndiv a$ for all $i=2,\dots,n$. Furthermore, the proposition holds for any set of distinct primes. The specific choice of the primes was not used in the proof.
\end{rema}

\begin{coro}
For every sequence of maximum length satisfying proposition \ref{remainders} (2), there exists a reverse sequence with
\begin{equation*} \begin{split}
&\exists\ b_i\in\{1,\dots,p_i-1\},\ i=2,\dots,n\\
&\forall\ q\in\{1,\dots,m\}\ \exists\ i\in\{2,\dots,n\}:m+1-q\equiv b_i\ (mod\ p_i).
\end{split} \end{equation*}
\end{coro}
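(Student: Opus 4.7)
The plan is to exhibit the $b_i$ explicitly and then verify the two requirements of the conclusion: that each $b_i$ lies in $\{1,\dots,p_i-1\}$ and that the family $(b_i)$ witnesses the reverse covering. For each $i\in\{2,\dots,n\}$, I would define $b_i$ to be the unique representative in $\{0,1,\dots,p_i-1\}$ of the residue class $m+1-a_i\ (mod\ p_i)$.

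The covering condition is then essentially free. For any $q\in\{1,\dots,m\}$, the hypothesis applied to $q$ produces an index $i$ with $q\equiv a_i\ (mod\ p_i)$, whence $m+1-q\equiv m+1-a_i\equiv b_i\ (mod\ p_i)$, which is exactly what the conclusion asks for. This amounts to the substitution $q\mapsto m+1-q$, which is a bijection of $\{1,\dots,m\}$ to itself, so no $q$ is missed.

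The substantive step is to rule out $b_i\equiv 0\ (mod\ p_i)$, and this is where the hypothesis that $m=\omega(n)$ is indispensable. I would argue by contradiction: if $b_i=0$, then $a_i\equiv m+1\ (mod\ p_i)$ for that $i$. Invoking the direction $(2)\Rightarrow(1)$ of Proposition \ref{remainders}, pick $a\in\mZ$ with $a\equiv -a_j\ (mod\ p_j)$ for all $j=2,\dots,n$, so that each $a+q$ with $q\in\{1,\dots,m\}$ is divisible by some $p_j$. Then $a+m+1\equiv -a_i+m+1\equiv 0\ (mod\ p_i)$, so the extended sequence $\{a+1,\dots,a+m+1\}$ of length $m+1$ also consists entirely of integers divisible by one of the odd primes $p_2,\dots,p_n$, which contradicts the maximality $m=\omega(n)$.

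I expect the main obstacle to be precisely this maximality argument rather than the algebraic covering. The substitution $q\mapsto m+1-q$ is trivial, but ensuring that no reversed residue class collapses to $0$ requires translating between the residue formulation (2) and the consecutive-integers formulation (1), and then invoking the defining optimality of $\omega(n)$ to forbid a one-step extension to the right. Everything else in the argument is routine.
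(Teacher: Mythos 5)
Your proposal is correct and follows essentially the same route as the paper: define $b_i\equiv m+1-a_i\ (mod\ p_i)$, observe that covering transfers under $q\mapsto m+1-q$, and use the maximality of $m$ to rule out $b_i\equiv 0\ (mod\ p_i)$. You merely spell out, via the Chinese remainder theorem, the step the paper compresses into the single assertion that maximality forces $m+1\not\equiv a_i\ (mod\ p_i)$.
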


\begin{proof}
The maximality of the given sequence length implies $m+1\not\equiv a_i\ (mod\ p_i)$ \linebreak for all $i=2,\dots,n$. Therefore, $b_i\equiv m+1-a_i\ (mod\ p_i),\ i=2,\dots,n$ satisfy the \linebreak requirements.
\end{proof}

The pairs of reverse sequences define a symmetry within the set of sequences of maximum length. The algorithmic exploitation of this interesting feature, however, seems to be difficult because $m$ is a priori unknown.\\

There is another way to characterise a sequence with the considered properties. Given a permutation of the primes $\{p_2,\dots,p_n\}$, the sequence can be constructed from left to right covering the next free position recursively. Conversely, a permutation of this kind can be derived from a given sequence.

\begin{prop} \label{permutation}
Let $m,n\in\mN,\ n>1$.\\ The following two statements are equivalent if $m=\omega(n)$.
  \begin{enumerate}
\item[(2)]
For every prime $p_2,\dots,p_n$, there exists one non-zero residue class\\
so that every integer of the sequence $\{1,\dots,m\}$ belongs to one of them.
\begin{equation*} \begin{split}
&\exists\ a_i\in\{1,\dots,p_i-1\},\ i=2,\dots,n\\
&\forall\ q\in\{1,\dots,m\}\ \exists\ i\in\{2,\dots,n\}:q\equiv a_i\ (mod\ p_i).
\end{split} \end{equation*}
\item[(3)]
There exists a permutation $(\pi_2,\dots,\pi_n)$ of $\{p_2,\dots,p_n\}$ and a tuple $(q_2,\dots,q_n)$\\
so that the sequence $\{1,\dots,m\}$ is completely covered by the residue classes $q_i$ mod $\pi_i$ when all $\pi_i$ were recursively assigned to the first free position $q_i$, respectively.
\begin{equation*} \begin{split}
&\exists\ \pi_i\in\{p_2,\dots,p_n\}\ \exists\ q_i\in\{1,\dots,m\},\ i=2,\dots,n\\
&\quad with\ \{\pi_2,\dots,\pi_n\}\setminus\{p_2,\dots,p_n\}=\emptyset,\\
&\quad q_i\not\equiv 0\ (mod\ \pi_i),\ i=2,\dots,n,\\
&\quad i<j \Rightarrow q_i<q_j,\ i,j\in\{2,\dots,n\},\\
&\quad q_2=1,\ and\\
&\quad q_i=min\{j\in\{1,\dots,m\}\mid \forall k<i : j\not\equiv q_k\ (mod\ \pi_k)\},\ i=3,\dots,n:\\
&\forall\ q\in\{1,\dots,m\}\ \exists\ i\in\{2,\dots,n\}:q\equiv q_i\ (mod\ \pi_i).
\end{split} \end{equation*}
  \end{enumerate}
\end{prop}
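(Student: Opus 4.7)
The direction $(3)\Rightarrow(2)$ is cosmetic: I would define $a_j := q_i$ where $i$ is the index with $\pi_i = p_j$, observe $a_j\in\{1,\ldots,p_j-1\}$ because $q_i\not\equiv 0\pmod{\pi_i}$, and note the covering condition transfers verbatim, since the two families of residue classes mod the $p_j$'s coincide.

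For $(2)\Rightarrow(3)$ I would run a greedy construction. Set $q_2 := 1$ and choose $\pi_2 := p_{j_2}$ to be any prime with $a_{j_2}\equiv 1\pmod{p_{j_2}}$; such a prime exists because position $1$ is covered in $(2)$. For $i=3,\ldots,n$, define $q_i$ to be the smallest position in $\{1,\ldots,m\}$ not yet covered by $\pi_2,\ldots,\pi_{i-1}$ with residues $q_2,\ldots,q_{i-1}$, and set $\pi_i := p_{j_i}$ for any not-yet-used prime with $a_{j_i}\equiv q_i\pmod{p_{j_i}}$. Several small checks then have to be made and are straightforward: the strict monotonicity $q_i<q_{i+1}$ follows because $q_i$ is covered by $\pi_i$ at the next step and so leaves the uncovered set; the condition $q_i\not\equiv 0\pmod{\pi_i}$ follows from $a_{j_i}\in\{1,\ldots,p_{j_i}-1\}$; and the chosen $\pi_i$ must differ from all earlier $\pi_k$, because if $p_{j_i}=\pi_k$ for some $k<i$, then $q_k\equiv a_{j_i}\equiv q_i\pmod{\pi_k}$, contradicting the fact that $q_i$ is uncovered by $\pi_k$.

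The main obstacle is to guarantee that the procedure does not run out of work before assigning all $n-1$ primes, i.e., that at every step $i\in\{3,\ldots,n\}$ the uncovered set is nonempty and thus $q_i$ is well-defined. This is the only place where the hypothesis $m=\omega(n)$ is genuinely used. I would argue by contradiction via the coprime product property from Remark~\ref{properties}: if $\{1,\ldots,m\}$ were already covered by only $k<n-1$ of the primes $\pi_2,\ldots,\pi_{k+1}$, then $j^*(\pi_2\mdot\ldots\mdot\pi_{k+1})\ge m$, and each of the $n-1-k\ge 1$ subsequent coprime multiplications by an unused prime raises $j^*$ by at least $1$, yielding
\[
\omega(n)=j^*(p_n\#/2)\ge m+(n-1-k)>m,
\]
which contradicts $m=\omega(n)$. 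I expect this iterated strict-increase step to be the delicate part of the argument; everything else is bookkeeping.

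Finally, once the greedy construction has assigned all $n-1$ primes, the covering condition in $(3)$ is inherited directly from $(2)$: by construction the residue class selected for $\pi_i$ in $(3)$ equals the one assigned to $p_{j_i}$ in $(2)$, so the two coverings describe the same subset of $\mZ$, and in particular cover $\{1,\ldots,m\}$ entirely.
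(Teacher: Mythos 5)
Your proof is correct, and its architecture coincides with the paper's: $(3)\Rightarrow(2)$ is the same relabelling $a_j:=q_i$ where $\pi_i=p_j$, and $(2)\Rightarrow(3)$ is the same greedy construction that assigns to each successive first-uncovered position a prime whose residue class from $(2)$ covers it (your check that such a prime is automatically distinct from all earlier $\pi_k$ is a detail the paper leaves implicit and is worth keeping). The one point of genuine divergence is the step where $m=\omega(n)$ is used, namely that the greedy process cannot exhaust $\{1,\dots,m\}$ before all $n-1$ primes are placed. The paper argues this directly: if some prime $p_j$ were left unused, one could assign it the position $m+1$ and obtain, via the $(2)\Rightarrow(1)$ direction of Proposition~\ref{remainders}, a covered run of length $m+1$, contradicting maximality. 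You instead invoke the coprime-product property of Remark~\ref{properties} and multiply in the unused primes one at a time, obtaining $\omega(n)\ge m+(n-1-k)>m$. Both are sound; your route delegates the work to a property the paper states without proof (which is itself established by essentially the same one-position-extension trick), and in exchange gives the marginally sharper count $m+(n-1-k)$ instead of $m+1$.
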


\begin{proof}\ \\
$(2)\Rightarrow(3)$:

We set $q_2=1$ and choose the smallest $p_j$ with $a_j\equiv 1\ (mod\ p_j)$ as $\pi_2$. Given $q_k$ and $\pi_k$ for $2\le k<i\le n$, we set $q_i=min\{j\in\{1,\dots,m\}\mid \forall k<i : j\not\equiv q_k\ (mod\ \pi_k)\}$. Then we choose $\pi_i$ as the smallest $p_j$ with $a_j\equiv q_i\ (mod\ p_j)$. There must exist a proper $p_j$ because $m=\omega(n)$. Otherwise, at least $p_j$ was not needed to cover a sequence of length $m$ and this sequence could be extended to a longer one by setting $q_i=m+1$ and  $\pi_i=p_j$ which contradicts to the definition of $\omega(n)$. All $q_i$ and $\pi_i$ inductively selected as described above fulfils (3).\\
$(3)\Rightarrow(2)$:

For every $i\in\{2,\dots,n\}$ , there exists a unique $j\in\{2,\dots,n\}$ with $\pi_j=p_i$ because $(\pi_2,\dots,\pi_n)$ is a permutation of $\{p_2,\dots,p_n\}$.
$a_i\equiv q_j\ (mod\ p_i),\ i=2,\dots,n$ satisfies the respective congruences of (2).
\end{proof}

\begin{rema}
The set of sequences with the property (2) includes any sequence fulfilling (3), even if $m<\omega(n)$. The proof did not make use of that condition. The proposition again holds for any set of distinct primes. The specific choice of the prime set was also not used in the proof.
\end{rema}

\begin{exam}
We give an example for $n=6$ for all of the three variants of the propositions \ref{remainders} and \ref{permutation}. The primes to be considered are  3, 5, 7, 11, and 13. The result $\omega(n)$=10 is reached in a sequence with the following properties:\\
Prime sequence. Proposition \ref{remainders} (1).\\
\hspace*{ 1cm}$a=12227$.\\
\hspace*{ 1cm}$3/(a+1),\ 7/(a+2),\ 5/(a+3),\ 3/(a+4),\ 11/(a+5),$\\
\hspace*{ 1cm}$13/(a+6),\ 3/(a+7),\ 5/(a+8),\ 7/(a+9),\ 3/(a+10)$.\\
Set of remainders. Proposition \ref{remainders} (2) or proposition \ref{permutation} (2).\\
\hspace*{ 1cm}$a_2=1,\ a_3=3,\ a_4=2,\ a_5=5,\ a_6=6$.\\
\hspace*{ 1cm}$1\equiv 1\ (mod\ 3),\ 2\equiv 2\ (mod\ 7),\ 3\equiv 3\ (mod\ 5),\ 4\equiv 1\ (mod\ 3),
                            \ 5\equiv 5\ (mod\ 11),$\\
\hspace*{ 1cm}$6\equiv 6\ (mod\ 13),\ 7\equiv 1\ (mod\ 3),\ 8\equiv 3\ (mod\ 5),\ 9\equiv 2\ (mod\ 7),
                            \ 10\equiv 1\ (mod\ 3)$.\\
Prime Permutation. Proposition \ref{permutation} (3).\\
\hspace*{ 1cm}$\pi_2=3,\ \pi_3=7,\ \pi_4=5,\ \pi_5=11,\ \pi_6=13$.\\
\hspace*{ 1cm}$q_2=1,\ q_3=2,\ q_4=3,\ q_5=5,\ q_6=6$.\\
\hspace*{ 1cm}$1\equiv 1\ (mod\ 3),\ 2\equiv 2\ (mod\ 7),\ 3\equiv 3\ (mod\ 5),\ 4\equiv 1\ (mod\ 3),
                            \ 5\equiv 5\ (mod\ 11),$\\
\hspace*{ 1cm}$6\equiv 6\ (mod\ 13),\ 7\equiv 1\ (mod\ 3),\ 8\equiv 3\ (mod\ 5),\ 9\equiv 2\ (mod\ 7),
                            \ 10\equiv 1\ (mod\ 3)$.
\end{exam}

\begin{rema} \label{representation}
As a consequence of the propositions \ref{remainders} and \ref{permutation}, we can formulate three equivalent descriptions of the condensed Jacobsthal function $\omega(n)$ for $n>1$.
  \begin{enumerate}
\item[(1)]
The function $\omega(n)$ is the maximum length $m$ of a sequence of consecutive \linebreak integers where each of them is divisible  by at least one of the first $n$ odd primes $p_2,\dots,p_n$.
\begin{equation*} \begin{split}
\omega(n)=\max\ \{&m\in\mN\mid\exists\ a\in\mZ\\
&\forall\ q\in\{1,\dots,m\}\ \exists\ i\in\{2,\dots,n\}:a+q\equiv 0\ (mod\ p_i)\}.
\end{split} \end{equation*}
\item[(2)]
The function $\omega(n)$ is the maximum $m\in\mN$ for which there exists a set of non-zero remainders $a_i\ mod\ p_i,\ i=2,\dots,n$ so that every $q\in\{1,\dots,m\}$ satisfies one of the congruences $q\equiv a_i\ (mod\ p_i)$.
\begin{equation*} \begin{split}
\omega(n)=\max\ \{&m\in\mN\mid\exists\ a_i\in\{1,\dots,p_i-1\},\ i=2,\dots,n\\
&\forall\ q\in\{1,\dots,m\}\ \exists\ i\in\{2,\dots,n\}:q\equiv a_i\ (mod\ p_i)\}.
\end{split} \end{equation*}
\item[(3)]
The function $\omega(n)$ is the maximum $m\in\mN$ for which there exist a permutation $(\pi_2,\dots,\pi_n)$ of $\{p_2,\dots,p_n\}$ and a tuple $(q_2,\dots,q_n)$ so that the sequence $\{1,\dots,m\}$ is completely covered by the residue classes $q_i$ mod $\pi_i$ when all $\pi_i$ were recursively assigned to the first free position $q_i$, respectively.
\begin{equation*} \begin{split}
\omega(n)=\max\ \{&m\in\mN\mid\exists\ \pi_i\in\{p_2,\dots,p_n\}\ \exists\ q_i\in\{1,\dots,m\},\ i=2,\dots,n\\
&\quad with\ \{\pi_2,\dots,\pi_n\}\setminus\{p_2,\dots,p_n\}=\emptyset,\\
&\quad q_i\not\equiv 0\ (mod\ \pi_i),\ i=2,\dots,n,\\
&\quad i<j \Rightarrow q_i<q_j,\ i,j\in\{2,\dots,n\},\\
&\quad q_2=1,\ and\ for\ i=3,\dots,n\\
&\quad q_i=min\{j\in\{1,\dots,m\}\mid \forall k<i : j\not\equiv q_k\ (mod\ \pi_k)\}:\\
&\forall\ q\in\{1,\dots,m\}\ \exists\ i\in\{2,\dots,n\}:q\equiv q_i\ (mod\ \pi_i)\}.
\end{split} \end{equation*}
  \end{enumerate}
\end{rema}
\ \\

In the next chapter, we will review several algorithmic ideas for the computation of Jacobsthal's function for primorial numbers. All of these approaches utilise the\linebreak versions (2) or (3) of understanding $\omega(n)$. When we present our results below, we will get back to the recent remark in order to derive demonstrative forms for the presentation of sequences.\\

\section{Computation of $\omega(n)$}
\label{Algorithms}

\subsection{Basic algorithms}

Brute force is the most obvious idea for computing $\omega(n)$. All possible remainder \linebreak combinations according to proposition \ref{remainders}, statement (2), are processed  sequentially. To each of them, a fill\&cont procedure is applied as follows. For each of the residue classes, all positions of a previously empty array of sufficient length covered by that residue class are labelled. The first unlabelled position corresponds to $m+1$ where $m$ is the length of the related sequence. This length is then registered for searching the maximum possible length.

This na\"{i}ve algorithm is henceforth referred to as Basic Sequential Algorithm (BSA). It can be implemented as a recursive procedure as depicted in the pseudocode \ref{BSA}.\\

\begin{algorithm}[!h]
\caption{\ Basic Sequential Algorithm (BSA).} \label{BSA}
\begin{algorithmic} 
\Procedure{basic\_sequential}{arr,k}
   \For{i=1 to plist[k]-1}
      \State arr1=arr; fill\_array(arr1,i,plist[k])
      \If {k<n-1} basic\_sequential(arr1,k+1)
      \Else\ count\_array(arr1) \EndIf
   \EndFor 
\EndProcedure
\State arr=empty\_array	\Comment Sequence array
\State plist=[$p_{2}$,...,$p_n$]	\Comment Array of primes
\State k=1	\Comment Starting prime array index
\State basic\_sequential(arr,k)	\Comment Recursion
\end{algorithmic}
\end{algorithm}

The filling of a previously empty array of sufficient length from left to right \linebreak according to proposition \ref{permutation}, statement (3), is another simple idea for computing $\omega(n)$. All permutations $(\pi_2,\dots,\pi_n)$ of the given primes $\{p_2,\dots,p_n\}$ are processed  sequentially. Starting with position 1, the first prime $\pi_2$ is chosen and with it the residue class 1 mod $\pi_2$. After labelling all positions covered by that residue class, the next unlabelled position is searched and the next prime of the permutation under consideration is analogously used until all primes are consumed.

If in any case a prime divides the index of the next unlabelled position then this case can be omitted because only non-zero residue classes are appropriate. Handling all permutations where primes are not divisors of their related position-numbers will therefore discover all sequences of the maximum possible length. This Basic Permutation Algorithm is described in pseudocode \ref{BPA}.\\

\begin{algorithm}[!h]
\caption{\ Basic Permutation Algorithm (BPA).} \label{BPA}
\begin{algorithmic} 
\Procedure{basic\_permutation}{arr,plist,k,q}
   \For{i=k to n-1}
   \If {q $\not\equiv$ 0 (mod plist[i])}
      \State arr1=arr; fill\_array(arr1,q,plist[i])
      \If {k<n-1}
         \State plist1=plist; interchange(plist1[k],plist1[i])
         \State k1=k+1; q1=next\_free\_position(arr1)
         \State basic\_permutation(arr1,plist1,k1,q1)
      \Else\ count\_array(arr1) \EndIf
   \EndIf
   \EndFor 
\EndProcedure
\State arr=empty\_array	\Comment Sequence array
\State plist=[$p_{2}$,...,$p_n$]	\Comment Array of primes
\State k=1	\Comment Starting prime array index
\State q=1	\Comment Starting sequence position
\State basic\_permutation(arr,plist,k,q)		\Comment Recursion
\end{algorithmic}
\end{algorithm}
\bigskip

Both of the presented algorithms are not really effective for practical purpose.\linebreak A simple estimation of their complexities reveals the problem. The number $N_{BSA}$ of residue class combinations in the Basic Sequential Algorithm is
$$N_{BSA}=\prod_{i=2}^{n}{(p_i-1)}.$$
For the Basic Permutation Algorithm, we have
$$N_{BPA}\le (n-1)!\ll N_{BSA}.$$
However, $(n-1)!$ also grows too fast to be reasonable. In the upcoming sections, we describe possibilities of how to dramatically reduce the number of sequences which are needed to be considered for determining $\omega(n)$.\\

The Basic Permutation Algorithm as described above is ineffective for another \linebreak reason. There may exist different, equivalent permutations constituting the same \linebreak sequence. In this case, only one of these permutations must be examined. Selecting the smallest prime if there are any at choice, defines the additional rule of a Reduced Permutation Algorithm (RPA) \cite{Ziller_2005} which thereby also downsizes the number of permutations needed to be considered. This principle was yet applied in part $(2)\Rightarrow(3)$ of the proof of proposition \ref{permutation}. The Basic Permutation Algorithm \ref{BPA} indeed produces doublets of sequences.

\begin{prop} \label{equivalent}
Let $m,n\in\mN,\ n>1$, and $m=\omega(n)$.\\
Given a permutation $(\pi_2,\dots,\pi_n)$ of $\{p_2,\dots,p_n\}$ and a tuple $(q_2,\dots,q_n)$ so that the \linebreak sequence $\{1,\dots,m\}$ is completely covered by the residue classes $q_i$ mod $\pi_i$ when all $\pi_i$ were recursively assigned to the first free position $q_i$, respectively. And let $q_{k_1}\equiv q_{k_2}\ (mod\ \pi_{k_2})$, $k_1<k_2$, for any $k_1,k_2\in\{2,\dots,n\}$.
\begin{equation*} \begin{split}
&\forall\ q\in\{1,\dots,m\}\ \exists\ i\in\{2,\dots,n\}:q\equiv q_i\ (mod\ \pi_i)\\
&\quad where\ \pi_i\in\{p_2,\dots,p_n\},\ q_i\in\{1,\dots,m\},\ i=2,\dots,n,\\
&\quad \{\pi_2,\dots,\pi_n\}\setminus\{p_2,\dots,p_n\}=\emptyset,\\
&\quad q_i\not\equiv 0\ (mod\ \pi_i),\ i=2,\dots,n,\\
&\quad i<j \Rightarrow q_i<q_j,\ i,j\in\{2,\dots,n\},\\
&\quad q_2=1,\ and\\
&\quad q_i=min\{j\in\{1,\dots,m\}\mid \forall k<i : j\not\equiv q_k\ (mod\ \pi_k)\},\ i=3,\dots,n,\\
&and\\
&\exists\ k_1,k_2\in\{2,\dots,n\},\ k_1<k_2:q_{k_1}\equiv q_{k_2}\ (mod\ \pi_{k_2}).
\end{split} \end{equation*}
Then there exist an equivalent permutation $(\varrho_2,\dots,\varrho_n)$ of $\{p_2,\dots,p_n\}$ and a tuple $(r_2,\dots,r_n)$ according to proposition \ref{permutation}, statement (3) so that $\varrho_k=\pi_k$, $r_k=q_k$ for $k<k_1$ if exist, and $\varrho_{k_1}=\pi_{k_2},\ r_{k_1}=q_{k_1}$. Furthermore for $k>k_1$ if exist, $r_k\equiv q_j\ (mod\ \varrho_k)$ if $\varrho_k=\pi_j$.
\begin{equation*} \begin{split}
&\exists\ \varrho_i\in\{p_2,\dots,p_n\}\ \exists\ r_i\in\{1,\dots,m\},\ i=2,\dots,n\\
&\quad with\ \{\varrho_2,\dots,\varrho_n\}\setminus\{p_2,\dots,p_n\}=\emptyset,\\
&\quad r_i\not\equiv 0\ (mod\ \varrho_i),\ i=2,\dots,n,\\&\quad i<j \Rightarrow r_i<r_j,\ i,j\in\{2,\dots,n\},\\
&\quad r_2=1,\\
&\quad r_i=min\{j\in\{1,\dots,m\}\mid \forall\ k<i : j\not\equiv r_k\ (mod\ \varrho_k)\},\ i=3,\dots,n,\\
&\quad \forall\ q\in\{1,\dots,m\}\ \exists\ i\in\{2,\dots,n\}:q\equiv r_i\ (mod\ \varrho_i),\\
&so\ that\\
&\forall\ k\in\{2,\dots,n\},\ k<k_1: \varrho_k=\pi_k\land\ r_k=q_k,\\
&\varrho_{k_1}=\pi_{k_2},\ r_{k_1}=q_{k_1},\ and\\
&\forall\ k\in\{2,\dots,n\},\ k>k_1: r_k\equiv q_j\ (mod\ \varrho_k)\ if\ \varrho_k=\pi_j.
\end{split} \end{equation*}
\end{prop}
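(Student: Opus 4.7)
The plan is to build $(\varrho, r)$ inductively: copy $(\pi, q)$ on the initial segment $k<k_1$, swap in $\pi_{k_2}$ at step $k_1$, and then at each subsequent step choose an as-yet-unused prime whose original residue class contains the current leftmost free position. The motivating observation is that if $r_k\equiv q_j\pmod{\pi_j}$, then placing $\pi_j$ at $r_k$ covers precisely the same positions in $\{1,\dots,m\}$ as placing it at $q_j$ did originally, so the running total cover is preserved throughout the construction.

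First, I would set $\varrho_k=\pi_k$ and $r_k=q_k$ for $k<k_1$; the recursive requirements of statement (3) are then inherited from the original tuple. Next, I set $\varrho_{k_1}=\pi_{k_2}$ and $r_{k_1}=q_{k_1}$. Two points need checking here: that $r_{k_1}$ is still the leftmost position uncovered by $\varrho_2,\dots,\varrho_{k_1-1}$ (it matches $q_{k_1}$, which was leftmost under $\pi_2,\dots,\pi_{k_1-1}$), and that $r_{k_1}\not\equiv 0\pmod{\varrho_{k_1}}$ -- this follows from the hypothesis $q_{k_1}\equiv q_{k_2}\pmod{\pi_{k_2}}$ combined with $q_{k_2}\not\equiv 0\pmod{\pi_{k_2}}$.

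For $k_1<k\le n$, I would proceed inductively, maintaining the invariant that the cover after step $k$ in the new ordering coincides with the original cover of $\pi_2,\dots,\pi_{k_1-1},\pi_{k_2}$ together with the primes chosen in new steps $k_1+1,\dots,k$. I define $r_k$ as the leftmost uncovered position and choose $\varrho_k=\pi_j$ for some unused $j$ with $r_k\equiv q_j\pmod{\pi_j}$ (breaking ties by taking the smallest such $j$, say). The crucial step -- and the main obstacle -- is showing such a $j$ always exists. Because the original $(\pi,q)$ covers $\{1,\dots,m\}$, some index $j\in\{2,\dots,n\}$ satisfies $r_k\equiv q_j\pmod{\pi_j}$; but if $j$ had already been used in the new construction, the invariant would force $r_k\equiv r_{k'}\pmod{\varrho_{k'}}$ at the corresponding earlier step $k'$, contradicting that $r_k$ is uncovered. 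Hence every valid $j$ is still available, and at least one exists.

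The remaining conditions of statement (3) are automatic: $r_k\not\equiv 0\pmod{\varrho_k}$ follows from $q_j\not\equiv 0\pmod{\pi_j}$; strict monotonicity of the $r_k$ is built into the leftmost-free rule; and $(\varrho_2,\dots,\varrho_n)$ is a permutation of $\{p_2,\dots,p_n\}$ because the selection map is a bijection on the remaining indices. Once the induction completes at $k=n$, the cover invariant yields $\{1,\dots,m\}$ in full, and the explicit identifications $\varrho_k=\pi_k$, $r_k=q_k$ for $k<k_1$, the swap $\varrho_{k_1}=\pi_{k_2}$, $r_{k_1}=q_{k_1}$, and the congruence $r_k\equiv q_j\pmod{\varrho_k}$ whenever $\varrho_k=\pi_j$ for $k>k_1$ are exactly the claimed conclusions.
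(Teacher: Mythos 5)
Your construction is essentially the paper's own: copy the initial segment, substitute $\pi_{k_2}$ at position $q_{k_1}$, and then repeatedly place a not-yet-used original prime on the leftmost free position. Your invariant argument for why the index $j$ covering $r_k$ cannot already have been consumed is actually more explicit than the paper's one-line justification, and it is correct: each placed $\varrho_{k'}=\pi_{j'}$ occupies exactly the original residue class $q_{j'}\bmod \pi_{j'}$, so a previously used $j$ would force $r_k$ to be covered already.

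There is, however, one step you leave unjustified, and it is precisely the place where the hypothesis $m=\omega(n)$ --- which your argument never invokes --- is needed: the well-definedness of $r_k$ itself. Your induction assumes at every level $k\le n$ that some position of $\{1,\dots,m\}$ is still uncovered, so that ``the leftmost uncovered position'' exists inside $\{1,\dots,m\}$ as statement (3) requires. After $k-1$ new steps the new cover is the union of $k-1$ of the \emph{original} residue classes, but it is a different subset of them than $\{2,\dots,k\}$, so the fact that the original tuple does not complete its cover prematurely does not transfer automatically; a priori some proper subset of the $n-1$ classes might already cover all of $\{1,\dots,m\}$. This is ruled out only by maximality: if $n-2$ of the classes sufficed, the omitted prime could be repositioned (e.g.\ onto $a+m+1$ via the Chinese remainder theorem) to produce a covered run of length $m+1$, contradicting $m=\omega(n)$. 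The paper compresses exactly this into the phrase ``because $\{1,\dots,m\}$ is completely covered and $m=\omega(n)$ is maximum''; the corresponding sentence needs to appear in your inductive step. With that addition your proof coincides with the paper's.
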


\begin{proof}
According to the assumptions, $r_i$ and $\varrho_i$ satisfy the requirements for $i\le k_1$. Given $r_k$ and $\varrho_k$ for $k_1\le k<i\le n$, we set
$$r_i=min\{j\in\{1,\dots,m\}\mid \forall\ k<i : j\not\equiv r_k\ (mod\ \varrho_k)\}.$$
There must exist one of the remaining primes $\pi_j\in\{\pi_{k_1},\dots,\pi_n\}\setminus\{\varrho_{k_1},\dots,\varrho_{i-1}\}$ with $r_i\equiv q_j\ (mod\ \pi_j)$ because $\{1,\dots,m\}$ is completely covered and $m=\omega(n)$ is maximum.\linebreak If there are several appropriate primes $\pi_j$ to choose from, we set  $\varrho_i=\pi_j$ for the \linebreak smallest $\pi_j$ possible. These $r_i$ and $\varrho_i$ inductively selected as described above complete the wanted permutation and tuple.
\end{proof}

\begin{exam}
We give an example for $n=8$. The primes to be considered are  3, 5, 7, 11, 13, 17, and 19. The result is $\omega(n)$=16.\\
The permutations\\
\hspace*{ 1cm}$\pi_2=3,\ \pi_3=13,\ \pi_4=11,\ \pi_5=7,\ \pi_6=5,\ \pi_7=17,\ \pi_8=19$, with\\
\hspace*{ 1cm}$q_2=1,\ q_3=2,\ q_4=3,\ q_5=5,\ q_6=6,\ q_7=8,\ q_8=9$,\\
and\\
\hspace*{ 1cm}$\varrho_2=5,\ \varrho_3=13,\ \varrho_4=11,\ \varrho_5=3,\ \varrho_6=7,
                            \ \varrho_7=17,\ \varrho_8=19$, with\\
\hspace*{ 1cm}$r_2=1,\ r_3=2,\ r_4=3,\ r_5=4,\ r_6=5,\ r_7=8,\ r_8=9$\\
are equivalent because\\
\hspace*{ 1cm}$k_1=2,\ k_2=6$ with $q_{k_1}\equiv q_{k_2}\ (mod\ \pi_{k_2})$, i.e. $1\equiv 6\ (mod\ 5)$, and\\
\hspace*{ 1cm}$\varrho_{k_1}=\pi_{k_2},\ r_{k_1}=q_{k_1}$.\\
The other required congruences are obvious when $\varrho_5=\pi_2$ and $\varrho_6=\pi_5$.
\end{exam}

The Reduced Permutation Algorithm (RPA) as depicted in pseudocode \ref{RPA} makes use of proposition \ref{equivalent} and skips all permutations for which there exists an equivalent permutation with a smaller prime at a compatible position.\\

\begin{algorithm}[!h]
\caption{\ Reduced Permutation Algorithm (RPA).} \label{RPA}
\begin{algorithmic} 
\Procedure{reduced\_permutation}{arr,plist,k,qlist}
   \For{i=k to n-1}
   \If {qlist[k] $\not\equiv$ 0 (mod plist[i])}
   \If {forall j<k : qlist[j]$\not\equiv$qlist[k] (mod plist[k]) or plist[j]<plist[k]}
      \State arr1=arr; fill\_array(arr1,qlist[k],plist[i])
      \If {k<n-1}
         \State plist1=plist; interchange(plist1[k],plist1[i])
         \State qlist[k+1]=next\_free\_position(arr1)
         \State reduced\_permutation(arr1,plist1,k+1,qlist)
      \Else\ count\_array(arr1) \EndIf
   \EndIf
   \EndIf
   \EndFor 
\EndProcedure
\State arr=empty\_array	\Comment Sequence array
\State plist=[$p_{2}$,...,$p_n$]	\Comment Array of primes
\State k=1	\Comment Starting prime array index
\State qlist=empty\_array	\Comment List of first unlabelled positions
\State qlist[k]=1	\Comment Starting sequence position
\State reduced\_permutation(arr,plist,k,qlist)		\Comment Recursion
\end{algorithmic}
\end{algorithm}

\subsection{Linear programming}

There is another, direct way of computing $\omega(n)$. The problem of determining the maximum length of a sequence covered by a choice of residue classes according to proposition \ref{remainders}, statement (2), can be reformulated as a linear program. This idea was recently applied by Resta to a similar topic \cite{Resta_2015}. The possibility of simply using standard software seems to be tempting. It is not an algorithmic idea itself. But we outline it in this section for the sake of completeness, simplicity, and mathematical straightforwardness.\\

Resuming statement (2) of proposition \ref{remainders}, there exists one non-zero residue class for every prime $p_2,\dots,p_n$ so that every integer of the sequence $\{1,\dots,m\}$ belongs to one of them.
\begin{equation*} \begin{split}
&\exists\ a_i\in\{1,\dots,p_i-1\},\ i=2,\dots,n:\\
&\forall\ q\in\{1,\dots,m\}\ \exists\ i\in\{2,\dots,n\}:q\equiv a_i\ (mod\ p_i).
\end{split} \end{equation*}

For every possible residue class, we define a binary variable \mbox{$x_{i,j}\in\{0,1\}$},\linebreak
 $i\in\{2,\dots,n\}$, $j\in\{1,...,p_i-1\}$ where $x_{i,j}=1$ if and only if $j=a_i$.

For the description of the sequence characterised as above, we formulate two sets of constraints. First, exactly one remainder should be chosen for each prime. Second, every position in the sequence $\{1,\dots,m\}$ should be covered by at least one of the residue classes.
\begin{equation} \begin{aligned}
&\sum_{j=1}^{p_i-1}{x_{i,j}}&&=1,\quad i=2,\dots,n,\\
&\sum_{\substack{i=2\\p_i\ \ndiv\ q}}^{n}{x_{i,q\ mod\ p_i}}&&\ge 1,\quad q=1,...,m.
\end{aligned} \end{equation}

The question of interest is whether or not there exist feasible solutions of these constraints. An objective function is not really needed. So, a dummy objective like
$$\max\sum_{i=2}^{n}{ \sum_{j=1}^{p_i-1}{x_{i,j}} }$$
completes the integer linear program (ILP) with binary variables.

In order to calculate $\omega(n)$, a trial and error approach like nested intervals or the like might be applied, solving the described ILP for different $m$. If the system has any feasible solutions for $m$ and none for $m+1$ then $\omega(n)=m$.\\

The search for the maximum suitable $m$ can also be embedded into a single,\linebreak extended ILP. Let $m_1<\omega(n)$ and $m_2>\omega(n)$ be tentative assumptions. We define additional binary variables $y_k\in\{0,1\},\ k\in\{m_1,\dots,m_2\}$ where $y_k=1$ if and only if position $k$ in the sequence is covered by any of the residue classes under consideration, and formulate the following linear program with binary variables.\\

\begin{equation} \begin{aligned} \label{ILP2}
&\qquad\qquad\qquad\max&&\sum_{k={m_1}}^{m_2}{2^{m_2-k}\mdot y_k},\\
&\sum_{j=1}^{p_i-1}{x_{i,j}}&&=1,\quad i=2,\dots,n,\\
&\sum_{\substack{i=2\\p_i\ \ndiv\ q}}^{n}{x_{i,q\ mod\ p_i}}&&\ge 1,\quad q=1,...,m_1-1,\\
&\sum_{\substack{i=2\\p_i\ \ndiv\ k}}^{n}{x_{i,k\ mod\ p_i}}\quad- y_k&&\ge 0,\quad k=m_1,...,m_2.
\end{aligned} \end{equation}

This linear program can be solved by using an established ILP solver. There are three kinds of solutions:

  \begin{enumerate}
\item[(1)]
$y_k=0$ for all $k=m_1,\dots,m_2$.

The choice of $m_1$ was too large, $\omega(n)<m_1$. Another try with a smaller $m_1$ is needed.
\item[(2)]
$y_k=1$ for all $k=m_1,\dots,m_2$.

The choice of $m_2$ was too small, $\omega(n)\ge m_2$. Another try with a larger $m_2$ is needed.
\item[(3)]
$y_k=1$ for all $k=m_1,\dots,m<m_2$,\quad $y_{m+1}=0$.\\
Then $\omega(n)=m$. An appropriate sequence of the length $m$ can be derived from the solution for the variables $x_{i,j}$. A longer sequence with the required properties cannot exist because of the choice of the objective function. Each of its coefficients is larger than the sum of the following ones.
  \end{enumerate}

The above described way of calculating $\omega(n)$ is very simple but its feasibility is limited as well. The required computation time rapidly grows with the increasing dimension of the problem.

\subsection{Bounding the remaining number of coprimes}

The principles of the basic algorithms described above can only be reasonably utilised if the exclusion of many of the theoretically possible cases from the calculation can be conclusively substantiated. The Basic Sequential Algorithm processes all potential combinations of residue classes. When particular combinations can be excluded from the analysis, the computational effort can be improved upon as is done in the Reduced Permutation Algorithm. A remainder combination may be discarded because it actually cannot cover a sequence of the considered length. This decision must be based on proved criteria, which we derive in the following section.\\

The general idea was independently realised by Hagedorn \cite{Hagedorn_2009} and Morack \cite{Morack_2014} in very similar approaches. Euler's totient function $\varphi(n)$ is known to be the number of positive integers up to a given integer $n$ that are coprime to $n$. Costello and Watts used a specific generalisation of this function for the construction of a computational bound of Jacobsthal's function \cite{Costello_Watts_2012}. This generalisation counts the number of integers in a given sequence which are coprime to given primes. Following this basic idea, we are looking at the topic the other way round.

The function $\psi(a,m,k)$ is the number of integers in the sequence $a+1,\dots,a+m$ which are not coprime to $P_k=\prod_{i=2}^k p_i$, i.e. which are divisible by one of the primes $p_2,\dots,p_k$. We emphasize that the first prime $p_1=2$ was omitted from the considerations in this context. In other words, $\psi(a,m,k)$ is the number of covered positions in the given sequence of length $m$ where the set of remainders is condensed in $a$.

\begin{defi}
For $a\in\mZ$, $m\in\mN$, $k\in\mN\ge2$, we define
\begin{equation*} \begin{split}
\psi(a,m,1)&=0,\\
\psi(a,m,k)&=|\{q\in\{1,\dots,m\}\mid \exists\ i\in\{2,\dots,k\}:a+q\equiv 0\ (mod\ p_i)\}|,\\
\nu(a,m,1)&=0,\ and\\
\nu(a,m,k)&=\psi(a,m,k)-\psi(a,m,k-1).
\end{split} \end{equation*}
\end{defi}

The difference $\nu(a,m,k)$ corresponds to the number of integers in the sequence which are divisible by $p_k$ but coprime to the primes $p_2,\dots,p_{k-1}$, i.e. the newly covered positions.

\begin{rema}
The sequence is completely covered if $\psi(a,m,n)=m$. The successive contribution of prime $p_k$ is expressed by $\nu(a,m,k)$. Thus for $2\le k\le n$, we get
$$\omega(n)=max\{m\in\mN\mid \exists\ a\in\mZ : \psi(a,m,n)=\psi(a,m,k-1)+\sum_{i=k}^n \nu(a,m,i)=m\}.$$
\end{rema}

Let $a$ solve the simultaneous congruences $a\equiv -a_j\ (mod\ p_j)$ for $j=2,\dots,n$. Given a tentative length $m$ of the sequence, the further processing of the corresponding set of remainders $\{a_j\}$ can be skipped if
$$\sum_{i=k}^n \nu(a,m,i)<m-\psi(a,m,k-1)$$
can be proved for any $k$. Whereas $\psi(a,m,k-1)$ can directly be counted, we below derive suitable upper bounds for $\sum_{i=k}^n \nu(a,m,i)$ for an early and effective decision whether this inequality applies.\\

In a first step, we simplify our estimate and make it independent on specific\linebreak sequences, i.e independent on $a$, or $a_j$, respectively. We consider the lowest possible number $\psi_{min}(m,k)$ of $m$ consecutive integers which are divisible by one of the primes $p_2,\dots,p_k$, and the highest possible number $\nu_{max}(m,k)$ of $m$ consecutive integers which are divisible by $p_k$ but coprime to $p_2,\dots,p_{k-1}$.

\begin{defi} \label{psi_min}
Let $m,k\in\mN$. Then
$$\psi_{min}(m,k)=\min_{a\in\mZ} \psi(a,m,k),$$
$$\nu_{max}(m,k)=\max_{a\in\mZ} \nu(a,m,k).$$
\end{defi}

The term $\nu_{max}(m,k)$ can be bounded as the result of combinatorial considerations as follows.

\begin{lemm} \label{bound}
Let $\lfloor x\rfloor$ denote the maximum integer not exceeding $x\in\mR$, $m\in\mN$, and\linebreak $k\in\mN\ge2$. Then
$$\nu_{max}(m,k)\le r_{m,k} - \psi_{min}(r_{m,k},k-1)$$
where $r_{m,k}=1+\left\lfloor\frac{m-1}{p_k}\right\rfloor$.
\end{lemm}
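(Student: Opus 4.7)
The plan is to show that $\nu(a,m,k)$ for any $a\in\mZ$ is bounded by $r_{m,k} - \psi_{\min}(r_{m,k},k-1)$, which then gives the same bound for the maximum $\nu_{\max}(m,k)$. I would start by identifying $r_{m,k}$ as the maximum possible number of multiples of $p_k$ in any window of $m$ consecutive integers: a short verification (comparing with $\lceil m/p_k\rceil$ on the boundary cases $m\equiv 0,1\pmod{p_k}$) gives $r_{m,k}=1+\lfloor(m-1)/p_k\rfloor$. Hence for any $a$, the multiples of $p_k$ appearing in $\{a+1,\dots,a+m\}$ form a block $\{bp_k,(b+1)p_k,\dots,(b+r-1)p_k\}$ of $r\le r_{m,k}$ consecutive multiples.

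The key observation is a reduction: a multiple $jp_k$ is counted by $\nu(a,m,k)$ precisely when $jp_k$ is coprime to $p_2,\dots,p_{k-1}$, which, since $p_k$ is coprime to each $p_i$ with $i<k$, is equivalent to $j\not\equiv 0\pmod{p_i}$ for all $i=2,\dots,k-1$. Consequently,
\[
\nu(a,m,k)\;=\;r-\psi(b-1,r,k-1),
\]
by the very definition of $\psi$ applied to the shifted window $\{b,b+1,\dots,b+r-1\}$. Bounding the second term below by its minimum over all shifts gives $\nu(a,m,k)\le r - \psi_{\min}(r,k-1)$.

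It then remains to pass from $r$ to $r_{m,k}$; the claim is that $f(r):=r-\psi_{\min}(r,k-1)$ is non-decreasing in $r$. For this I would verify the two easy monotonicity inequalities
\[
\psi_{\min}(r,k-1)\;\le\;\psi_{\min}(r+1,k-1)\;\le\;\psi_{\min}(r,k-1)+1,
\]
the left one because adding a position can only increase $\psi(a,\cdot,k-1)$ pointwise, and the right one by evaluating $\psi(a,r+1,k-1)$ at an $a$ that realises $\psi_{\min}(r,k-1)$ and noting a single new position contributes at most one. These two inequalities immediately give $f(r)\le f(r+1)$, hence $f(r)\le f(r_{m,k})$. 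Combining yields $\nu(a,m,k)\le r_{m,k}-\psi_{\min}(r_{m,k},k-1)$ for every $a$, and taking the maximum over $a$ gives the claim.

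The only mildly delicate step is the monotonicity of $f$, because $\psi_{\min}$ is a min over shifts and the shift achieving the minimum may differ for $r$ and $r+1$; but the two-sided inequality above handles this uniformly. All other steps are direct counting from the definitions of $\psi$, $\nu$, and $r_{m,k}$.
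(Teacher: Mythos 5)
Your proof is correct and follows essentially the same route as the paper's: both reduce the count of multiples of $p_k$ in the window that are coprime to $p_2,\dots,p_{k-1}$ to a $\psi$-count over a block of $r$ consecutive integers (using $p_i \mid j p_k \Leftrightarrow p_i \mid j$), then bound that count below by $\psi_{min}$. The only difference is how the case $r < r_{m,k}$ is handled — the paper silently pads the block to $r_{m,k}$ consecutive multiples, while you prove monotonicity of $r \mapsto r - \psi_{min}(r,k-1)$ via the two-sided inequality; your version is slightly more careful but not a different argument.
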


\begin{proof}
The number of multiples of $p_k$ in a sequence of $m$ consecutive integers is at most $r_{m,k}$. Every sequence contains at least $r_{m,k}-1$ such terms. Let $p_k\mdot (a+j)$, $a\in\mZ$ and $j=1,\dots,r_{m,k}$, be the representation of $r_{m,k}$ consecutive multiples of $p_k$.

Every term $p_k\mdot (a+j)$ is divisible by $p_i$ for  $i\in\{2,\dots,k-1\}$ if and only if $p_i\mdiv (a+j)$ because $p_i\perp p_k$.
$$p_i\mdiv (a+j)\Leftrightarrow p_i\mdiv p_k\mdot (a+j).$$
The sequence $\{a+1,\dots,a+r_{m,k}\}$ contains multiples of $p_i$ at the same positions $j$\linebreak as the arithmetic progression $\{p_k\mdot (a+j)\}$ does. The number of elements in this\linebreak progression which are divisible by any $p_i,\ i=2,\dots,k-1$ is therefore\linebreak $\psi(a,r_{m,k},k-1)\ge\psi_{min}(r_{m,k},k-1)$.
\end{proof}

The term $r_{m,k}$ can directly be calculated whereas the function $\psi_{min}(m,k)$ requires complex computations. For this purpose, we applied a brute force algorithm very\linebreak similar to the Basic Sequential Algorithm \ref{BSA}. While BSA intends to maximise the\linebreak number of covered positions in a given array, the computation of $\psi_{min}(m,k)$ needs to minimise it. We provide a table of function values of $\psi_{min}(m,k)$ for $m\le500$ and $k\le11$ in an ancillary file.

The function $\psi_{min}(m,k)$ grows very slowly with increasing $k$ while the time needed for it explodes. So, it must be sufficient to limit the maximum considered prime for practical application.

\begin{coro}
Let $a\in\mZ$, $m,n,t\in\mN$, $k\in\mN\ge2$, and $r_{m,i}=1+\left\lfloor\frac{m-1}{p_i}\right\rfloor$, $i=k,\dots,n$.
For any $t<k$,
$$\sum_{i=k}^n \nu(a,m,i) \le \sum_{i=k}^n r_{m,i} - \sum_{i=k}^n \psi_{min}(r_{m,i},t).$$
\end{coro}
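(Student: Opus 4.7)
The plan is to obtain the bound as an essentially immediate consequence of Lemma \ref{bound}, after one additional monotonicity observation for $\psi_{min}$ in its second argument. The argument splits into two small steps.

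First, I would apply Lemma \ref{bound} termwise. For each $i\in\{k,\dots,n\}$, the bound $\nu(a,m,i)\le\nu_{max}(m,i)$ is immediate from definition \ref{psi_min}, and combining it with Lemma \ref{bound} gives
$$\nu(a,m,i)\le r_{m,i}-\psi_{min}(r_{m,i},i-1).$$
Summing from $i=k$ to $i=n$ yields
$$\sum_{i=k}^{n}\nu(a,m,i)\le\sum_{i=k}^{n}r_{m,i}-\sum_{i=k}^{n}\psi_{min}(r_{m,i},i-1).$$

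Second, I would replace each $\psi_{min}(r_{m,i},i-1)$ by the (possibly) smaller quantity $\psi_{min}(r_{m,i},t)$ using the fact that $\psi_{min}(r,\cdot)$ is non-decreasing in its second argument. For any fixed $a\in\mZ$, the set counted by $\psi(a,r,s)$ only grows when $s$ increases, because enlarging $s$ just enlarges the set of admissible primes $p_2,\dots,p_s$ that may divide $a+q$; passing to the minimum over $a$ preserves the direction of the inequality by the standard trick of evaluating $\psi(\,\cdot\,,r,t)$ at a minimiser of $\psi(\,\cdot\,,r,s)$. Since the hypothesis $t<k\le i$ forces $t\le i-1$, monotonicity gives $\psi_{min}(r_{m,i},t)\le\psi_{min}(r_{m,i},i-1)$, and substitution into the previous display produces the claimed bound.

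The main obstacle is minimal: the sole point requiring any real argument is the monotonicity of $\psi_{min}$ in its second argument, and that reduces to a one-line set-inclusion observation. A boundary check confirms the edge case $k=2$: then $t=1$ is the only admissible value, $\psi_{min}(\cdot,1)=0$ by definition, and the corollary degenerates to the trivial pointwise bound $\nu(a,m,i)\le r_{m,i}$, consistent with the counting interpretation of $r_{m,i}$ as an upper bound on the number of multiples of $p_i$ in a window of length $m$.
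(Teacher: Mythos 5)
Your argument is correct and follows the paper's own proof essentially verbatim: apply Lemma \ref{bound} termwise, sum over $i$, and use the monotonicity of $\psi_{min}$ in its second argument together with $t<k\le i$ to pass from $\psi_{min}(r_{m,i},i-1)$ to $\psi_{min}(r_{m,i},t)$. Your justification of the monotonicity (set inclusion plus evaluating at a minimiser) is slightly more explicit than the paper's one-line remark, but the route is the same.
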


\begin{proof}
According to lemma \ref{bound}, $\nu(a,m,i) \le \nu_{max}(m,i)\le r_{m,i} - \psi_{min}(r_{m,i},i-1)$ holds for any $i=k,\dots,n$. The function $\psi_{min}(m,k)$ is monotonically increasing in $k$\linebreak because the contribution of the respective next greater prime cannot be negative.\linebreak So, $\psi_{min}(r_{m,i},i-1) \ge \psi_{min}(r_{m,i},t)$ is a consequence of $t<k\le i$.
\end{proof}

With the help of this corollary, we can efficiently improve the na\"{i}ve sequential algorithm. We recall that the further processing of the set of remainders $\{a_j\}$ for a tentative sequence length $m$ can be skipped if
$\sum_{i=k}^n \nu(a,m,i)<m-\psi(a,m,k-1)$ can be proved for any $k$. In this context, it is sufficient to verify the criterion
\begin{equation}\label{crit}
\sum_{i=k}^n r_{m,i} - \sum_{i=k}^n \psi_{min}(r_{m,i},t)<m-\psi(a,m,k-1)
\end{equation}
for a suitable $t<k$.

\ 

This is the fundamental idea of the Discarding Sequential Algorithm (DSA) \ref{DSA} to compute values of $\omega(n)$. Starting with a sufficiently long empty array and a tentative sequence length $m$, the parameter $m$ is increased if a longer sequence was found.

It has shown to be more efficient if the first cycles, which employ only small primes, were processed without checking for the possibility of rejection. In later stages of the recursion, say from $p_{k^*}$ on, the criterion can be applied. If the table of $\psi_{min}$ includes the corresponding values for $t=k-1$ then they are preferred. Otherwise, the largest possible $t$ must be used.

Another modification can slightly speed up the algorithm. The criterion \ref{crit} need not be compared for the entire length $m$ of the considered sequence. It is sufficient to take the maximum length $m^*$ of a subsequence into account which includes all\linebreak uncovered positions. While counting $\psi(a,m,k-1)$, the appropriate $m^*$ can simply be determined as well.\\

\begin{algorithm}[!h]
\caption{\ Discarding Sequential Algorithm (DSA).} \label{DSA}
\begin{algorithmic}
\Procedure{discarding\_sequential}{arr,k}
   \For{i=1 to plist[k]-1}
      \State arr1=arr; fill\_array(arr1,i,plist[k])
      \If {k<n-1}
         \State go\_on=true
         \If { k$\ge\text{k}^*$}
            \If { criterion \ref{crit} fulfilled} go\_on=false \EndIf
         \EndIf
         \If {go\_on=true} discarding\_sequential(arr1,k+1)
         \EndIf
      \Else\ count\_array(arr1)
         \If {longer\_sequence\_found} increase\_m \EndIf
      \EndIf
   \EndFor 
\EndProcedure
\State arr=empty\_array	\Comment Sequence array
\State m=starting\_sequence\_length	\Comment Starting sequence length
\State plist=[$p_2$,...,$p_n$]	\Comment Array of primes
\State k=1	\Comment Starting prime array index
\State $\text{k}^*$=starting\_index\_for\_criterion	\Comment Starting prime array index for criterion
\State discarding\_sequential(arr,k)	\Comment Recursion
\end{algorithmic}
\end{algorithm}

\pagebreak\ \\

Although the DSA algorithm includes an efficient way to recognise inappropriate remainder combinations, it is time-consuming especially for large primes. The number of permutations to be considered in the RPA algorithm, however  is much lower than the number of possible remainder combinations. The connection of these two principles in a Combined Reduced Permutation and Discarding Sequential Algorithm (CRPDSA) \ref{CRPDSA} can make the advantages of both ideas work together \cite{Ziller_Morack_2015}.

For smaller primes, the DSA algorithm is applied sequentially up to a fixed prime. The remaining primes are handled with the RPA algorithm which was extended by a test of the DSA-criterion. The number of necessary cases can thus rigorously be reduced. An optimum point for the switch between both sub-algorithms was empirically found at  $p_n/3$ when $p_n$ is the largest prime considered. We request $p_{k^*}$<$p_n/3$.\\

\pagebreak

\begin{algorithm}[!h]
\caption{\ Combined Reduced Permutation and \newline \hspace*{2.45cm}\ Discarding Sequential Algorithm (CRPDSA).} \label{CRPDSA}

\begin{algorithmic} 
\Procedure{combined\_discarding}{arr,plist,k,qlist}
 \If {$p_k$<$p_k/3$}	\Comment DSA part

   \For{i=1 to plist[k]-1}
      \State arr1=arr; fill\_array(arr1,i,plist[k]); go\_on=true
      \If { k$\ge\text{k}^*$}
         \If { criterion \ref{crit} fulfilled} go\_on=false \EndIf
      \EndIf
      \If {go\_on=true} combined\_discarding(arr1,plist,k+1,qlist)
      \EndIf
   \EndFor 

 \Else
   \If {empty\_qlist} fill\_qlist \EndIf

   \For{i=k to n-1}	\Comment RPA part
   \If {qlist[k] $\not\equiv$ 0 (mod plist[i])}
   \If {forall j<k : qlist[j]$\not\equiv$qlist[k] (mod plist[k]) or plist[j]<plist[k]}
      \State arr1=arr; fill\_array(arr1,qlist[k],plist[i])
      \If {k<n-1} go\_on=true
         \If { criterion \ref{crit} fulfilled} go\_on=false \EndIf
         \If {go\_on=true}
           \State plist1=plist; interchange(plist1[k],plist1[i])
           \State qlist[k+1]=next\_free\_position(arr1)
           \State combined\_discarding(arr1,plist1,k+1,qlist)
           \EndIf
      \Else\ count\_array(arr1)
         \If {longer\_sequence\_found} increase\_m \EndIf
      \EndIf
   \EndIf
   \EndIf
   \EndFor 

\EndIf
\EndProcedure
\State arr=empty\_array	\Comment Sequence array
\State m=starting\_sequence\_length	\Comment Starting sequence length
\State plist=[$p_{2}$,...,$p_n$]	\Comment Array of primes
\State qlist=empty\_array	\Comment List of first unlabelled positions
\State k=1	\Comment Starting prime array index
\State $\text{k}^*$=starting\_index\_for\_criterion	\Comment Starting prime array index for criterion
\State combined\_discarding(arr,plist,k,qlist)	\Comment Recursion
\end{algorithmic}

\end{algorithm}

\subsection{Counting the actually remaining number of coprimes}

The last suggestion of this paper, how to compute $\omega(n)$, is an improvement of the DSA algorithm \ref{DSA}. The actually remaining number of coprimes in the sequence is exactly counted for every pending prime
instead of using a general, estimated bound for it. This is more time-consuming for a specific sequence under consideration. On the other hand, exact counts are often lower than the bounds. So, corresponding remainder constellations can be rejected much earlier. This compensates for the time effort spent for counting the exact frequencies.

We redefine some functions of the last subsection in a more general way. They are not restricted to require primes in their ascending order any more. The idea of permutations plays a role again.

\begin{defi}
Let $(\pi_2,\dots,\pi_n)$ be an arbitrary but fixed permutation of $\{p_2,\dots,p_n\}$.
For $a\in\mZ$, $m\in\mN$, $k\in\mN\ge2$, we define
\begin{equation*} \begin{split}
\psi(a,m,1)&=0,\\
\psi(a,m,k)&=|\{q\in\{1,\dots,m\}\mid \exists\ i\in\{2,\dots,k\}:a+q\equiv 0\ (mod\ \pi_i)\}|,\\
\nu(a,m,1)&=0,\ and\\
\nu(a,m,k)&=\psi(a,m,k)-\psi(a,m,k-1).
\end{split} \end{equation*}
\end{defi}

The difference $\nu(a,m,k)$ now corresponds to the number of integers divisible by $\pi_k$ but coprime to $\pi_2,\dots,\pi_{k-1}$. Again, let $m$ be the tentative length of the sequence, and $\{a_j\}$ a set of remainders where $a$ is the solution of the simultaneous congruences $a\equiv -a_j\ (mod\ \pi_j)$, $j=2,\dots,n$. The further processing of the corresponding set of remainders $\{a_j\}$ can then be skipped if
$$\sum_{i=k}^n \nu(a,m,i)<m-\psi(a,m,k-1)$$
can be proved for any $k$.

Every position in the sequence corresponds to a definite residue class for each prime. So, the number of remaining coprimes which belong to a residue class\linebreak can easily be counted. By this means, we derive individual bounds for $\nu(a,m,i)$\linebreak depending on the specific choice of $\{a_j\}$.

\begin{defi}
Let $(\pi_2,\dots,\pi_n)$ be an arbitrary but fixed permutation of $\{p_2,\dots,p_n\}$,\\
$m,n,k,t\in\mN$ with $2\le k<t\le n$, and $a\in\mZ$ with $a\equiv -a_j\ (mod\ \pi_j)$, $j=2,\dots,k$.\linebreak
Then we define
\begin{equation*} \begin{split}
S_1&=\{1,\dots,m\},\\
S_k&=\{q\in S_{k-1}\mid a+q\not\equiv 0\ (mod\ \pi_k)\},\\
\text{and for}\ r_t\in\{1,\dots,\pi_t-1\}\qquad&\hspace{15cm}\\
\varrho(a,m,k,t,r_t)&=|\{q\in S_{k}\mid q\equiv r_t\ (mod\ \pi_t)\}|,\ and\\
\varrho_{max}(a,m,k,t)&=\max_{r_t\in\{1,\dots,\pi_k-1\}} \varrho(a,m,k,t,r_t).
\end{split} \end{equation*}
\end{defi}

The set of positions of a sequence $a+1,\dots,a+m$ which are coprime to all primes up to $\pi_k$ is denoted by $S_k$, i.e. the set of so far uncovered positions. Given the set $S_k$, \ $\varrho(a,m,k,k+1,r_{k+1})$ is the number of positions of the sequence which can be covered in the next step if $a_{k+1}=r_{k+1}$ was chosen. We now relate the function $\nu$ to $\varrho_{max}$.

\begin{lemm} \label{count}
Let $a\in\mZ$, and $m,n,k,t\in\mN$ with $2\le k<t\le n$. Then
$$\nu(a,m,t)\le\varrho_{max}(a,m,k,t).$$
\end{lemm}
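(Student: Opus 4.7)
The plan is to unravel both sides of the inequality in terms of the nested sets $S_i$ and then exploit their monotonicity; the whole argument is essentially bookkeeping. First I would rewrite $\nu(a,m,t)=\psi(a,m,t)-\psi(a,m,t-1)$ as the cardinality of a single residue class modulo $\pi_t$. By definition, $\nu(a,m,t)$ counts those $q\in\{1,\dots,m\}$ for which $\pi_t\mid a+q$ holds while $\pi_i\nmid a+q$ for every $i\in\{2,\dots,t-1\}$. By the definition of $S_{t-1}$, this is exactly
$$\nu(a,m,t)=|\{q\in S_{t-1}\mid a+q\equiv 0\ (mod\ \pi_t)\}|.$$
Introducing $r_t^\ast\in\{0,1,\dots,\pi_t-1\}$ as the unique residue with $r_t^\ast\equiv -a\ (mod\ \pi_t)$, the congruence $a+q\equiv 0\ (mod\ \pi_t)$ is equivalent to $q\equiv r_t^\ast\ (mod\ \pi_t)$.

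Next I would invoke the monotonicity of the chain $S_1\supseteq S_2\supseteq\dots\supseteq S_n$, which is immediate from the recursive definition $S_j=\{q\in S_{j-1}\mid a+q\not\equiv 0\ (mod\ \pi_j)\}$. The hypothesis $k<t$ gives $k\le t-1$ and hence $S_{t-1}\subseteq S_k$, so
$$\nu(a,m,t)\le|\{q\in S_k\mid q\equiv r_t^\ast\ (mod\ \pi_t)\}|=\varrho(a,m,k,t,r_t^\ast).$$

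The only step requiring any care is to check that $r_t^\ast$ lies in $\{1,\dots,\pi_t-1\}$, so that $\varrho(a,m,k,t,r_t^\ast)$ is one of the quantities over which $\varrho_{max}(a,m,k,t)$ is maximised. This is the condition $\pi_t\nmid a$; in the algorithmic framework of the subsection, $a$ is constructed from the simultaneous congruences $a\equiv -a_j\ (mod\ \pi_j)$ with non-zero residues $a_j\in\{1,\dots,\pi_j-1\}$, so $\pi_j\nmid a$ holds for every $j$ relevant to the computation, $j=t$ included. Once $r_t^\ast\ne 0$ is secured, the bound $\varrho(a,m,k,t,r_t^\ast)\le\varrho_{max}(a,m,k,t)$ is immediate from the definition of the maximum, completing the proof. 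No step presents a substantial obstacle beyond this sign convention.
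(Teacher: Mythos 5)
Your proof is correct and follows essentially the same route as the paper: identify $\nu(a,m,t)$ with $\varrho(a,m,t-1,t,r_t)$ for the residue $r_t\equiv -a\ (mod\ \pi_t)$, use the inclusion $S_{t-1}\subseteq S_k$ coming from $k<t$, and then bound by $\varrho_{max}$. The one place you go beyond the paper is in flagging explicitly that $r_t\ne 0$ requires $\pi_t\nmid a$, a hypothesis the paper leaves implicit in the algorithmic setting where $a\equiv -a_j\ (mod\ \pi_j)$ with $a_j\in\{1,\dots,\pi_j-1\}$; this is a legitimate and welcome clarification, not a different argument.
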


\begin{proof}
By definition, there exists an $r_t\in\{1,\dots,\pi_t-1\}$ so that\\
$\nu(a,m,t)=\varrho(a,m,t-1,t,r_t)$.\qquad For $k<t$, we get\\
$\varrho(a,m,t-1,t,r_t)\le\varrho(a,m,k,t,r_t)\le\varrho_{max}(a,m,k,t)$ 
because $S_k\supseteq S_{t-1}$.
\end{proof}

In consequence of lemma \ref{count}, it follows
$$\sum_{i=k}^n \nu(a,m,i)\le\sum_{i=k}^n\varrho_{max}(a,m,k-1,i),$$
because $i\ge k$ and $k<t$, and we can now formulate the criterion
\begin{equation} \label{crit2}
\sum_{i=k}^n\varrho_{max}(a,m,k-1,i)<m-\psi(a,m,k-1)
\end{equation}
for the rejection of the further processing of the corresponding set of remainders $\{a_j\}$ for a tentative sequence length $m$.\\

The key idea of the effective realisation of this criterion in an algorithm is\linebreak choosing the residue class with the maximum possible number of newly covered\linebreak positions $\max_{t\in\{k,\dots,n\}}\varrho_{max}(a,m,k-1,t)$ in every recursion cycle of level $k$. This choice implies a permutation algorithm because it selects $\pi_t$ as the next prime. At the same time, it selects the residue class $r_t\ mod\ \pi_t$ with\\
$\varrho(a,m,k-1,t,r_t)=\max_{t\in\{k,\dots,n\}}\varrho_{max}(a,m,k-1,t)$. Thus, the so-called Greedy\linebreak Permutation Algorithm (GPA) \ref{GPA} combines permutations with respect to primes and to remainders.

After level $k+1$ of recursion was done for that specific choice, the following\linebreak cycles of level $k$ do not need to consider the consumed $r_t\ mod\ \pi_t$ again because all\linebreak combinations of it were thereby examined. Otherwise, equivalent permutations would be considered. For the following cycles of level $k$, exclusively, the corresponding $\varrho(a,m,k-1,t,r_t)$ is set to be 0, and $\varrho_{max}(a,m,k-1,t)$ is determined anew.

Because of the specific choice of the remainders $r_t$, the Greedy Permutation \linebreak Algorithm (GPA) is confined to process only prime permutations with \linebreak $\varrho(a,m,i-1,i,r_i)\le\varrho(a,m,j-1,j,r_j)$ for $i<j$, i.e. the contribution of the primes is monotonically decreasing with the permutation order. Furthermore, a cyclic\linebreak inner permutation of primes with the same contribution is prevented by blocking those $r_t\ mod\ \pi_t$ always consumed at an earlier level.\\

\pagebreak

\begin{algorithm}[!h]
\caption{\ Greedy Permutation Algorithm (GPA).} \label{GPA}
\begin{algorithmic} 
\Procedure{greedy\_permutation}{arr,k,ftab}
\If {k<$\text{k}^*$}	\Comment Sequential part
   \For{i=1 to plist[k]-1}
      \State arr1=arr; fill\_array(arr1,i,plist[k])
      \If {k=$\text{k}^*$-1}
         \State  ftab1=ftab
         \State fill\_frequency\_table\_of\_remainders(ftab1)
      \EndIf
      \State greedy\_permutation(arr1,k+1,ftab1)
   \EndFor 
\Else
   \If {k<n-1}	\Comment Greedy permutation part
      \State update\_$\varrho_{max}(\dots,t)$
      \State go\_on=true
      \If { criterion \ref{crit2} fulfilled} go\_on=false \EndIf

       \If {go\_on=true}	\Comment Permutation level k+1
          \State select\_appropriate\_$r_t$\_and\_$\pi_t$
          \State arr1=arr; fill\_array(arr1,$r_t$,$\pi_t$)
          \State  ftab1=ftab
          \State update\_frequency\_table\_of\_remainders(ftab1)
          \State greedy\_permutation(arr1,k+1,ftab1)
      \EndIf

       \State delete\_frequency\_of\_$r_t$\_mod\_$\pi_t$(ftab)
       \If {exists\_non-zero\_frequency\_mod\_$\pi_t$(ftab)}	\Comment Permutation level k
          \State greedy\_permutation(arr,k,ftab)
      \EndIf
   \Else\ count\_array(arr)
         \If {longer\_sequence\_found} increase\_m \EndIf
   \EndIf
\EndIf
\EndProcedure

\State arr=empty\_array	\Comment Sequence array
\State m=starting\_sequence\_length	\Comment Starting sequence length
\State plist=[$p_{2}$,...,$p_n$]	\Comment Array of primes
\State k=1	\Comment Starting prime array index
\State $\text{k}^*$=starting\_index\_for\_criterion	\Comment Starting prime array index for criterion
\State ftab=empty\_table	\Comment Frequency table of remainders

\State greedy\_permutation(arr,k,ftab)	\Comment Recursion
\end{algorithmic}
\end{algorithm}

\pagebreak

It can be  shown that the GPA algorithm is again more efficient if the first cycles, which employ only small primes, were processed simply sequentially without any checking for the possibility of rejection. This corresponds to the Basic Sequential\linebreak Algorithm \ref{BSA}. In later stages of the recursion, say from $p_{k^*}$ on, the new criterion will be applied, and the recursion changes to the true Greedy Permutation Algorithm.\\

\subsection{Parallel processing}

All depicted explicit algorithms can be processed in parallel in a simple way. All of them are organised as treelike recursive procedures. Given a fixed recursion level $k^*$, the recurrent computation can be prepared in a first step by executing the recursive procedures until level $k^*$, only. All relevant parameters defining the next procedure call for level  $k^*+1$ are written to a separate parameter file. Now in a second step, the subsequent subtrees or series of them, each starting at level  $k^*+1$, can be processed to their end on different cores, processors, or even on different computers. The last step summarises all individual results of these subprocesses, including all sequences of maximum length.

Parallel computation can be used most efficiently by a prior sorting of the subtrees of level $k^*+1$ by descending $\psi(a,m,k)$. This  leads to an earlier recognition of longer sequences on average. The rapid enlargement of the respective array prevents from examining many too small sequences unnecessarily, and therefore speeds up the entire approach.\\

\pagebreak

\section{Results}

The algorithms described in section \ref{Algorithms} act very differently concerning its computational effort. We computed the entire values of the function $h(n)$ where all involved primes could be represented as single byte, i.e. $p_n\le251$. In these calculations, we used\linebreak several algorithms each suitable regarding computation time. The equality of corresponding results served as an implicit verification of the correctness of the implementations. Our data are also in accord with all published data for $p_n\le227$ \cite{Hagedorn_2009}.

For every investigated $n$, we performed an exhaustive retrieval for all existing\linebreak sequences of maximum length while searching for $\omega(n)$. As far as we know, this has not been done ever before. We provide this data in ancillary files.

\subsection{Calculated data}

All algorithms output values of the function $\omega(n)$ according to definition \ref{omega} as their main result. In addition, every compatible sequence of the appropriate length $\omega(n)$ was recorded. The values of $h(n)$, see definition \ref{h}, were deduced from $\omega(n)$ by\linebreak applying corollary \ref{h_omega}.\\

An initial illustration of the results is outlined in figure 1. We graph the values\linebreak of $h(n)$ and the number of sequences of length $\omega(n)$. This number considerably varies in a non-obvious manner.\\

\begin{figure}[!h]
  \centering
  \includegraphics[width=0.95\textwidth]{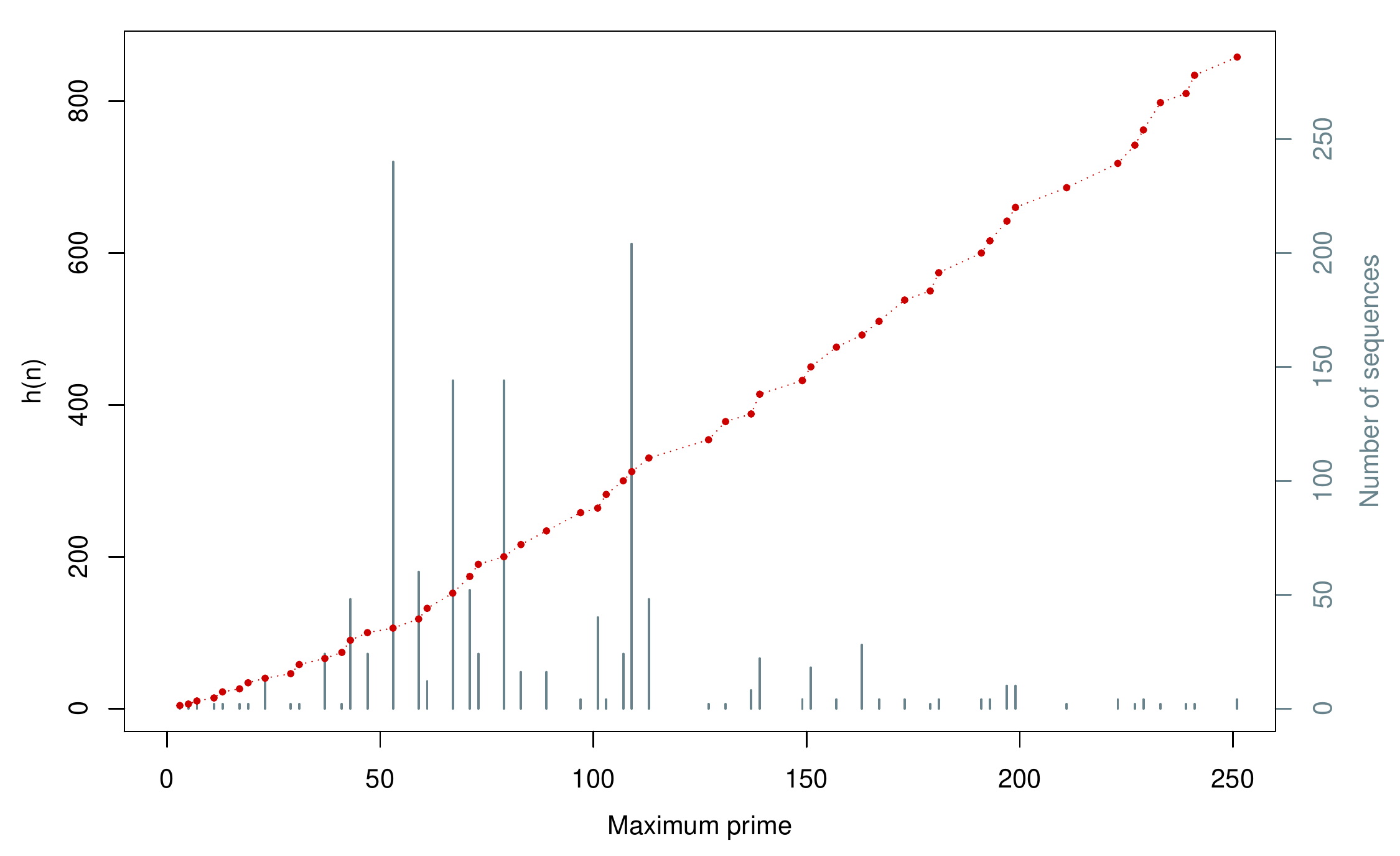}
\end{figure}
\stepcounter{figure}
\ \ Figure 1: Values of the function $h(n)$ and the number of respective sequences

\ \ of maximum length.\\

The complete data are provided in table 1 including the maximum processed prime $p_n$, the values of $h(n)$ and $\omega(n)$, and the number of sequences $n_{seq}$ for every $n\le54$.\\

\begin{table}[!h]
  \centering
\begin{tabular}{cc}	
\begin{tabular}{!\vl rr|rrr !\vl}
  \noalign{\hrule height 2pt} 
\rule{0pt}{14pt}$n$&$p_n$ &$h(n)$& $\omega(n)$& $n_{seq}$\\[2pt]
  \noalign{\hrule height 2pt} 
  \rule{0pt}{14pt}1&2&2&$-$&$-$\\
2&3&4&1&1\\
3&5&6&2&2\\
4&7&10&4&2\\
5&11&14&6&2\\
6&13&22&10&2\\
7&17&26&12&2\\
8&19&34&16&2\\
9&23&40&19&12\\
10&29&46&22&2\\
11&31&58&28&2\\
12&37&66&32&24\\
13&41&74&36&2\\
14&43&90&44&48\\
15&47&100&49&24\\
16&53&106&52&240\\
17&59&118&58&60\\
18&61&132&65&12\\
19&67&152&75&144\\
20&71&174&86&52\\
21&73&190&94&24\\
22&79&200&99&144\\
23&83&216&107&16\\
24&89&234&116&16\\
25&97&258&128&4\\
26&101&264&131&40\\
27&103&282&140&4\\
  [2pt]
  \noalign{\hrule height 2pt}
\end{tabular}
&
\begin{tabular}{!\vl rr|rrr !\vl}
  \noalign{\hrule height 2pt} 
\rule{0pt}{14pt}$n$&$p_n$ &$h(n)$& $\omega(n)$& $n_{seq}$\\[2pt]
  \noalign{\hrule height 2pt} 
  \rule{0pt}{14pt}28&107&300&149&24\\
29&109&312&155&204\\
30&113&330&164&48\\
31&127&354&176&2\\
32&131&378&188&2\\
33&137&388&193&8\\
34&139&414&206&22\\
35&149&432&215&4\\
36&151&450&224&18\\
37&157&476&237&4\\
38&163&492&245&28\\
39&167&510&254&4\\
40&173&538&268&4\\
41&179&550&274&2\\
42&181&574&286&4\\
43&191&600&299&4\\
44&193&616&307&4\\
45&197&642&320&10\\
46&199&660&329&10\\
47&211&686&342&2\\
48&223&718&358&4\\
49&227&742&370&2\\
50&229&762&380&4\\
51&233&798&398&2\\
52&239&810&404&2\\
53&241&834&416&2\\
54&251&858&428&4\\
  [2pt]
  \noalign{\hrule height 2pt}
\end{tabular}
\end{tabular}
 \caption{Computation results.}
\end{table}

\subsection{Computation time}

All algorithms except ILP were implemented and executed on a common PC with an i7 processor at boosted 3.9 GHz in a single thread application. For solving the\linebreak respective integer linear programs according to the equations \ref{ILP2}, however, we\linebreak utilised SYMPHONY software \cite{SYMPHONY,COIN} which used all threads in parallel. The corresponding time needs were enlarged by an empirically estimated factor to make them comparable.\\

The time consumption of each algorithm rapidly grows with the number of primes. Figure 2 depicts this growth as well as the variation between different algorithms.\linebreak A quasi-logarithmic time scale $log(1+t)$ was applied where $t$ was the exact time need rounded to full seconds.\\

\begin{figure}[!h]
  \centering
  \includegraphics[width=0.95\textwidth]{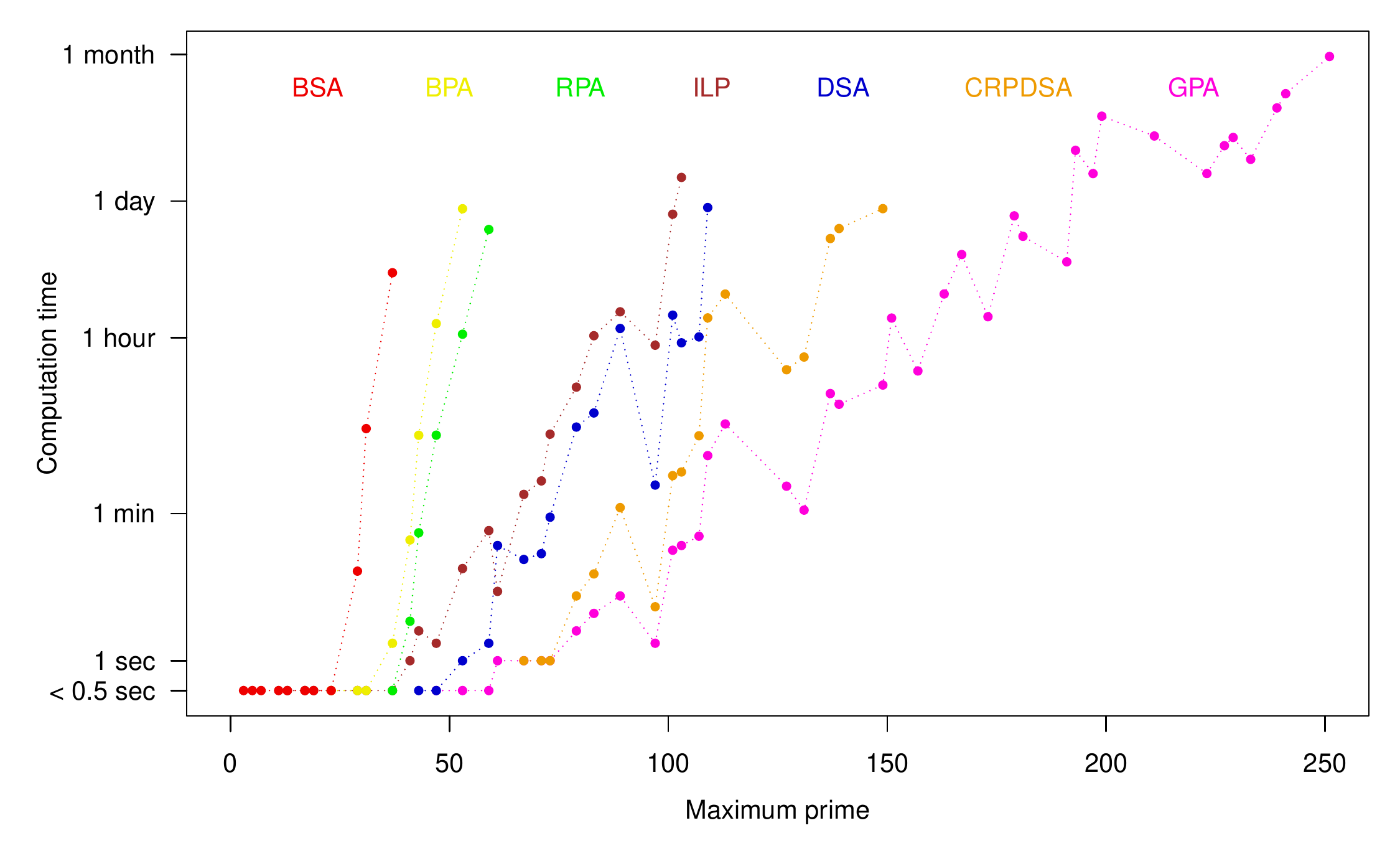}
  \caption{Comparison of the computational cost for various algorithms.}
\end{figure}

\subsection{Ancillary data}

In addition to this paper, we provide four files including the complete results of our calculations. The first file presents intrinsic data of the DSA algorithm \ref{DSA}.\\

\textbf{psi\_min.txt}\\

This file contains a table of values of the function $\psi_{min}(m,k)$ as described in definition \ref{psi_min} for $m\le500$ and $k\le11$. The data were computed using a brute force approach very similar to the Basic Sequential Algorithm \ref{BSA}. While BSA intends to maximise the number of covered positions in a given array, the computation of $\psi_{min}(m,k)$ needs to minimise it.\\

The other ancillary files contain exhaustive lists of all sequences of the \mbox{appropriate} maximum lengths.\ According to propositions \ref{remainders} and \ref{permutation}, these sequences can be\linebreak represented in three ways as always has been concluded in remark \ref{representation}.\\

\textbf{moduli.txt}\\

This file contains the modulus-representation of the sequences. In remark \ref{representation},\linebreak paragraph (1), every position $q\in\{1,\dots,m\}$ of the sequence was related to at least one prime modulus $p_i,\ i\in\{2,\dots,n\}$. The progression of primes $p_q,\ q\in\{1,\dots,m\}$ of the minimum appropriate moduli $p_q$ each position $q$ is a unique representation of the sequence under consideration.\\

\textbf{remainders.txt}\\

This file contains the remainder-representation of the sequences, i.e. the ordered set\linebreak of remainders $a_i\ (mod\ p_i),\ i\in\{2,\dots,n\}$ as described in remark \ref{representation}, paragraph (2).\\

\textbf{permutations.txt}\\

This file contains the permutation-representation of the sequences, i.e. the permutation of primes $\pi_i,\ i\in\{2,\dots,n\}$ as described in remark \ref{representation}, paragraph (3).\\

The sequences in \dq remainders.txt\dq\ are separately sorted  for each $n$ by ascending\linebreak remainders. This order was maintained in the other files \dq permutations.txt\dq\ and\linebreak \dq moduli.txt\dq\ to make a direct comparison possible.

\subsection{Final remarks}

All depicted algorithms may also be applied to arbitrary sets of different primes. The specific choice of consecutive primes was not necessarily required. With the help\linebreak of prime separation as described in remark \ref{properties}, all values of the original Jacobsthal function $j(n)$ can therefore be computed using a generalised implementation of one\linebreak of these algorithms.\\

\subsubsection*{Acknowledgement}
The authors would like to express their appreciation to the On-Line Encyclopedia of Integer Sequences \cite{OEIS}. It provides an exceptional collection of integer progressions. This data is very informative and helps discover new items and links within the wide field of number theory.

Furthermore, the work of the COIN-OR project \cite{COIN} is highly acknowledged.\linebreak Modern approaches in computational operations research are developed,\linebreak implemented, and disseminated in open source software.\\

\subsubsection*{Contact}
marioziller@arcor.de\\
axelmorack@live.com

\pagebreak

\bibliography{References}     

\end{document}